\newcommand{\num}{L}
\newtheorem{definition}{Definition}
\newtheorem{theorem}{Theorem}
\newtheorem{lemma}{Lemma}
\newtheorem{rem}{Remark}
\newtheorem{examp}{Example}
\title{The box dimension of degenerate spiral trajectories of a class of ordinary differential equations}
\author{Renato Huzak, Domagoj Vlah, Darko \v Zubrini\'c and Vesna \v Zupanovi\'c}
\begin{document}

\begin{titlepage}

\end{titlepage}

\maketitle

\begin{abstract}
In this paper we initiate the study of the box dimension of degenerate spiral trajectories of a class of ordinary differential equations. A class of singularities of focus type with two zero eigenvalues (nilpotent or more degenerate) has been studied. We find the box dimension of a polynomial degenerate focus of type $(n,n)$ by exploiting the well-known fractal results for $\alpha$-power spirals. In the general $(m,n)$ case, we formulate a conjecture about the box dimension of a degenerate focus.
Further, we reduce the fractal analysis of planar nilpotent contact points to the study of the box dimension of a slow-fast spiral generated by their ``entry-exit" function. There exists a bijective correspondence between the box dimension of the slow-fast spiral and the codimension of contact points. We also construct a three-dimensional vector field that contains a degenerate spiral, called an elliptical power spiral, as a trajectory. 
\end{abstract}

\font\csc=cmcsc10

\def\esssup{\mathop{\rm ess\,sup}}
\def\essinf{\mathop{\rm ess\,inf}}
\def\wo#1#2#3{W^{#1,#2}_0(#3)}
\def\w#1#2#3{W^{#1,#2}(#3)}
\def\wloc#1#2#3{W_{\scriptstyle loc}^{#1,#2}(#3)}
\def\osc{\mathop{\rm osc}}
\def\var{\mathop{\rm Var}}
\def\supp{\mathop{\rm supp}}
\def\Cap{{\rm Cap}}
\def\norma#1#2{\|#1\|_{#2}}

\def\C{\Gamma}

\let\text=\mbox

\catcode`\@=11
\let\ced=\c
\def\a{\alpha}
\def\b{\beta}
\def\c{\gamma}
\def\d{\delta}
\def\g{\lambda}
\def\o{\omega}
\def\q{\quad}
\def\n{\nabla}
\def\s{\sigma}
\def\div{\mathop{\rm div}}
\def\sing{{\rm Sing}\,}
\def\singg{{\rm Sing}_\ty\,}

\def\A{{\cal A}}
\def\F{{\cal F}}
\def\H{{\cal H}}
\def\W{{\bf W}}
\def\M{{\cal M}}
\def\N{{\cal N}}
\def\S{{\cal S}}

\def\eR{{\bf R}}
\def\eN{{\bf N}}
\def\Ze{{\bf Z}}
\def\Qe{{\bf Q}}
\def\Ce{{\bf C}}

\def\ty{\infty}
\def\e{\varepsilon}
\def\f{\varphi}
\def\:{{\penalty10000\hbox{\kern1mm\rm:\kern1mm}\penalty10000}}
\def\ov#1{\overline{#1}}
\def\D{\Delta}
\def\O{\Omega}
\def\pa{\partial}

\def\st{\subset}
\def\stq{\subseteq}
\def\pd#1#2{\frac{\pa#1}{\pa#2}}
\def\sgn{{\rm sign}\,}
\def\sp#1#2{\langle#1,#2\rangle}

\newcount\br@j
\br@j=0
\def\q{\quad}
\def\gg #1#2{\hat G_{#1}#2(x)}
\def\inty{\int_0^{\ty}}
\def\remark{\smallskip\advance\br@j by1 \noindent{\csc Remark
\the\br@j.}\kern3mm\relax}
\def\example{\smallskip\advance\br@j by1 \noindent{\csc Example
\the\br@j.}\kern3mm\relax}
\def\od#1#2{\frac{d#1}{d#2}}

\def\bg{\begin}
\def\eq{equation}
\def\bgeq{\bg{\eq}}
\def\endeq{\end{\eq}}
\def\bgeqnn{\bg{eqnarray*}}
\def\endeqnn{\end{eqnarray*}}
\def\bgeqn{\bg{eqnarray}}
\def\endeqn{\end{eqnarray}}

\def\bgeqq#1#2{\bgeqn\label{#1} #2\left\{\begin{array}{ll}}
\def\endeqq{\end{array}\right.\endeqn}

\def\abstract{\bgroup\leftskip=2\parindent\rightskip=2\parindent
        \noindent{\bf Abstract.\enspace}}
\def\endabstract{\par\egroup}

\def\udesno#1{\unskip\nobreak\hfil\penalty50\hskip1em\hbox{}
             \nobreak\hfil{#1\unskip\ignorespaces}
                 \parfillskip=\z@ \finalhyphendemerits=\z@\par
                 \parfillskip=0pt plus 1fil}
\catcode`\@=11

\def\cal{\mathcal}
\def\eR{\mathbb{R}}
\def\eN{\mathbb{N}}
\def\Ze{\mathbb{Z}}
\def\Qu{\mathbb{Q}}
\def\Ce{\mathbb{C}}

\def\sideremark#1{\ifvmode\leavevmode\fi\vadjust{\vbox to0pt{\vss % the remark
    \hbox to 0pt{\hskip\hsize\hskip1em           %                will appear only
 \vbox{\hsize2cm\tiny\raggedright\pretolerance10000%                on the side
 \noindent #1\hfill}\hss}\vbox to8pt{\vfil}\vss}}}%
%SetFonts
\newcommand{\edz}[1]{\sideremark{#1}}

\section{Introduction} \label{section-introduction}

A fractal-dimensional analysis of a planar weak focus $(-y+\dots)\frac{\partial}{\partial x}+(x+\dots)\frac{\partial}{\partial y}$ has been done in \cite{zuzu,belg} using a box dimension approach. The box dimension of trajectories spiralling around a weak focus has been computed. Furthermore, an explicit relation between the box dimension and the leading power in the asymptotic expansion of the Poincar\' e map of the weak focus has been obtained (for more details see \cite{zuzu,belg}). The box dimension of spiral trajectories changes from trivial to nontrivial for parameter values at which some bifurcations occur (Hopf-Takens bifurcations \cite{zuzu}, Bogdanov-Takens bifurcations \cite{Lana2,DHVV}, discrete saddle-node and period doubling bifurcations \cite{elzuzu,Lana}, etc.) Our paper is a natural continuation of \cite{zuzu}. We deal with a class of planar singular points of focus type (we assume that the linear part has both eigenvalues equal to zero) and compute their box dimension.
\smallskip

We focus on a particular deformation  (to be specified later) of $X=-ny^{2n-1}\frac{\partial}{\partial x}+mx^{2m-1}\frac{\partial}{\partial y}$, with $m,n\in\mathbb{N}\setminus\{0\}$ and $m+n>2$, or $Y=(y-x^{2n})\frac{\partial}{\partial x}$. The vector field $X$ has a center at $(x,y)=(0,0)$, with $H(x,y)=x^{2m}+y^{2n}$ as a first integral, while $Y$ has the curve of singularities $C=\{y=x^{2n}\}$ and horizontal regular orbits. We call $X$, with $m,n>1$, the degenerate case and $Y$ or $X$ with $m=1$ or $n=1$ the nilpotent case.
The fractal-dimensional analysis of such a deformation is far more difficult than  analysis of weak foci due to absence of ``regularity" of spiral trajectories. For example, the radius of the spiral trajectories near the weak focus is a decreasing function (see Theorem 5 of \cite{zuzu}). This is not true if we deal with the degenerate or nilpotent case. Moreover, the Poincar\'{e} map near the focus of the perturbation of $X$ with $m\ne n$ has two different asymptotics, one along the $x$-axis and the other one along the $y$-axis. For some other examples of ``iregular" spiral trajectories and the calculation of their box dimensions see e.g. \cite{burrell,wavy,Bessel,mil,oscil}.

\smallskip

In this paper we will often speak about a trajectory ``near the origin $(x,y)=(0,0)$". This stands for its part spiralling around the origin and
contained in an open disk centered at the origin.

\smallskip

When $m=n$ and $n$ is odd in $X$, then we prove the following result (see Theorem \ref{theorem-glavni} in Section \ref{section-main}):
\begin{itemize}
    \item \textit{Let $k\in\mathbb{N}\setminus\{0\}$. Then any trajectory of the vector field \begin{equation}\label{perturb-1}X_n:=X\pm n\left(x^n y^{n-1} (x^{2n}+y^{2n})^k\frac{\partial}{\partial x}+x^{n-1} y^n (x^{2n}+y^{2n})^k\frac{\partial}{\partial y}\right)\end{equation}
  near the origin $(x,y)=(0,0)$ is Minkowski nondegenerate and its box dimension is equal to $2-\frac{2}{1+2kn}$. If $k=0$, any trajectory of (\ref{perturb-1}) near the origin is Minkowski measurable and its box dimension is equal to $1$. }
\end{itemize}
This has been proved in \cite{zuzu} for $X_1$ (see also Section \ref{section-definitions}). When the sign in $X_n$ is negative (resp. positive), we deal with a stable (resp. unstable) focus at the origin. When $n$ is even, then the origin is a center (see Remark \ref{remark-period} in Section \ref{section-degenerate-nn}). In the proof of Theorem \ref{theorem-glavni}, we construct a bi-Lipschitz map between spiral trajectories of (\ref{perturb-1}) and $\alpha$-power spirals \cite{tricot} with a well-known box dimension. We use the fact that bi-Lipschitz maps preserve the box dimension (see Section \ref{section-definitions}). Such (degenerate) focus of type $(n,n)$ has the same  asymptotic of the Poincar\' e map in each direction (see \cite{medvedeva}).
\smallskip

Based on \cite{burrell} (Remark \ref{remark-conj-b} in Section \ref{section-gen-PC}) in the general case ($m$ and $n$ may be different) we propose the following conjecture:
\begin{itemize}
    \item \textit{Let $k\in\mathbb{N}\setminus\{0\}$, $m\ge n$ and let $m,n$ be odd. Then any trajectory of  \begin{equation}\label{perturb-2}X_{m,n}:=X\pm\left(nx^m y^{n-1} (x^{2m}+y^{2n})^k\frac{\partial}{\partial x}+m x^{m-1} y^n (x^{2m}+y^{2n})^k\frac{\partial}{\partial y}\right)\end{equation}
  near the origin $(x,y)=(0,0)$ is Minkowski nondegenerate and its box dimension is equal to $2-\frac{1+\frac{n}{m}}{1+2nk}$. }
\end{itemize}
Note that $X_{n,n}=X_n$. When $m$ is even (resp. $n$ is even), then system $X_{m,n}$ is invariant under the symmetry $(x,t)\to(-x,-t)$ (resp. $(y,t)\to(-y,-t)$) and has a center at the origin $(x,y)=(0,0)$ (see Remark \ref{remark-period-gen} in Section \ref{section-gen-PC}). In the rest of the paper we suppose that $m$ and $n$ are odd.
A spiral trajectory of (\ref{perturb-2}) cannot be (bi-Lipschitz) mapped onto some regular $\alpha$-power spiral, for $m\ne n$, due to the presence of different asymptotic expansions of the Poincar\'{e} map of (\ref{perturb-2}) depending on the direction. 
\smallskip

For the case $k=0$ in (\ref{perturb-2}) we refer to Theorem \ref{theorem-glavni-generalized} in Section \ref{section-gen-PC}.

\smallskip

We obtain $X_{m,n}$ after applying transformation $F_{m,n}:\eR^2\to\eR^2$, defined by
\bgeq\label{F}
(x,y)\mapsto (\bar x,\bar y)=((\sgn x)|x|^{1/m},(\sgn y)|y|^{1/n}),\nonumber 
\endeq
to $X_1$,
and using multiplication by $mn|\bar x|^{m-1}|\bar y|^{n-1}$. The map $F_{m,n}$ transforms spiral trajectories of $X_1$ to spiral trajectories of $X_{m,n}$. It is clear that $F_{m,n}$ is not bi-Lipschitz as $(x,y)\to (0,0)$, and the box dimension is not preserved. A similar idea is used in  e.g. \cite{ggg,gt1,gt} using the  quasi-homogeneous polar coordinates in the computation of generalized Lyapunov coefficients.

\smallskip

The main advantage of working with the model (\ref{perturb-2}) is that in polar coordinates the vector field can be reduced to a  Bernoulli or linear
differential equation which can be solved. When $m=n$ (resp. $m\ne n$), we use the standard (resp. generalized) polar coordinates. See Section \ref{section-degenerate-nn} (resp. Section \ref{section-gen-PC}).

\smallskip

If $k=0$ (resp. $k>0$) in $X_n$ or $X_{m,n}$, then trajectories near the origin are comparable with exponential spirals (resp. $\alpha$--power spirals). As it will be clear in Theorem \ref{theorem-glavni} and Theorem \ref{theorem-glavni-generalized}, comparability with an $\alpha$--power spiral is not sufficient to say something about the box dimension of the trajectories near the origin.
\smallskip

The curve of singularities $C$ of $Y$ is divided into a normally attracting part $x>0$, a normally repelling part $x<0$ and a nilpotent contact point $(x,y)=(0,0)$ between them. To study dynamics of a small $O(\epsilon)$-perturbation $Y_{\epsilon}$ of $Y$ ($\epsilon\ge 0$ is a small singular perturbation parameter), one typically uses geometric singular perturbation theory due to Fenichel \cite{Fenichel} (Fenichel describes the dynamics of $Y_\epsilon$ near normally hyperbolic parts). Near the contact point where the normal hyperbolicity is lost one uses family blow-up (see \cite{DR,KS}). The deformation $Y_\epsilon$ (often called slow-fast system), with $\epsilon>0$, may have spiral trajectories (i.e., a focus) near the origin and a natural question arises: How to compute the box dimension of the spiral and does the box dimension tell us something about the type of the contact point (the codimension, cyclicity, etc.)? Instead of computing the box dimension of the spiral at level $\epsilon>0$, it is more natural to calculate the box dimension of a  slow-fast spiral when $\epsilon\to 0$ (see Definition \ref{def-lim-focus} in Section \ref{section-singular}). Such slow-fast spiral is a union of a geometric chirp (see \cite{Lap}) and a part of the curve $C$ near the origin. The geometric chirp is defined using fast and slow subsystems of $Y_\epsilon$ and so-called entry-exit relation \cite{Benoit,DM-entryexit} (see Section \ref{section-singular}). We find the box dimension of the slow-fast spiral and establish a bijective correspondence between its box dimension and the codimension of generic or non-generic contact points (see Theorem \ref{theorem-slow-fast} in Section \ref{section-singular}). One of the reasons why it is more convenient to work with slow-fast spirals ($\epsilon=0$) instead of regular spirals at level $\epsilon>0$ is that then we don't have to use the family blow-up. We point out that a box dimension approach has already been used in the planar slow-fast setting to study limit cycles configurations Hausdorff close to so-called canard cycles (hence, not contact points). See \cite{RBox,Domagoj,CHV}.

\smallskip

Although, for the sake of readability, in this paper we develop techniques for the specific perturbations of $X$ and $Y$, we believe that similar ideas can be used in a more general framework. 
\smallskip

In Section \ref{section-definitions} we give properties of the box dimension that will be often used throughout the paper. Section \ref{section-main} is devoted to the study of the box dimension of (\ref{perturb-1}) and numerical examples. In Section \ref{section-main1} we find the box dimension of spiral trajectories of (\ref{perturb-2}) for $k=0$. The fractal analysis of planar nilpotent turning points is given in Section \ref{section-singular}. In Section \ref{section-3dim} we construct a $3$-dimensional vector field having an elliptical power spiral as a trajectory.

\section{The box dimension and its properties}\label{section-definitions}

In this section we briefly recall the notion of box dimension in $\mathbb{R}^N$ (for more details we refer the reader to  \cite{Falconer,Lap,tricot} and references therein).
For a bounded set $A\st\eR^N$ we define the \emph{$\e$-neigh\-bour\-hood} of  $A$ as
$
A_\e:=\{y\in\eR^N\:d(y,A)<\e\}
$.
By the \emph{lower $s$-dimensional  Minkowski content} of $A$, for $s\ge0$, we mean
$$
\M_*^s(A):=\liminf_{\e\to0}\frac{|A_\e|}{\e^{N-s}},
$$
and analogously for the \emph{upper $s$-dimensional Minkowski content} $\M^{*s}(A)$ (we replace $\liminf_{\e\to0}$ with $\limsup_{\e\to0}$).
If $\M^{*s}(A)=\M_*^{s}(A)$, we call the common value the \emph{$s$-dimensional Minkowski content of $A$}, and denote it by $\M^s(A)$.
The \emph{lower and upper box dimensions} of $A$ are
$$
\underline\dim_BA:=\inf\{s\ge0\:\M_*^s(A)=0\}
$$
 and analogously
$\ov\dim_BA:=\inf\{s\ge0\:\M^{*s}(A)=0\}$.
If these two values coincide, we call it simply the \emph{box dimension} of $A$, and denote it by $\dim_BA$. The upper box dimension is finitely stable, i.e. $\ov\dim_B(A_1\cup A_2)=\max\{\ov\dim_BA_1,\ov\dim_BA_2\}$, with $A_1,A_2\st\eR^N$.
If $0<\M_*^d(A)\le\M^{*d}(A)<\ty$ for some $d$, then we say
 that $A$ is \emph{Minkowski nondegenerate}. In this case obviously $d=\dim_BA$.
In the case when the lower or upper $d$-dimensional Minkowski content of $A$ is equal to $0$ or $\ty$, where $d=\dim_BA$, we say that $A$ is \emph{degenerate}.
If there exists $\M^d(A)$ for some $d$ and $\M^d(A)\in(0,\ty)$, then we say that $A$ is \emph{Minkowski measurable}. 

\smallskip

Suppose that $F:A\subset \mathbb{R}^{N}\rightarrow \mathbb{R}^{M}$ is a Lipschitz map. Then 
$$\underline\dim_{B}F(A)\leq \underline\dim_{B}A, \ \ov\dim_{B}F(A)\leq \ov\dim_{B}A.$$ 
If $F:A\subset \mathbb{R}^{N}\rightarrow \mathbb{R}^{M}$ is a bi-Lipschitz map (i.e., there exist constants $\kappa_1>0$ and $\kappa_2>0$ such that
$\kappa_1\left\|x-y\right\|\leq\left\|F(x)-F(y)\right\|\leq \kappa_2 \left\|x-y\right\|$,
for every $x,y\in A$), then $$\underline\dim_{B}A=\underline\dim_{B}F(A), \ \ov\dim_{B}A=\ov\dim_{B}F(A).$$
If $F$ is a bi-Lipschitz map and $A$ is Minkowski nondegenerate, then $F(A)$ is Minkowski nondegenerate (see Theorem 4.1 in \cite{ZuZuR3}).
\smallskip

We use the following notation in Section \ref{section-singular}. For any two sequences of positive real numbers $(a_l)_{l\in\mathbb{N}}$ and $(b_l)_{l\in\mathbb{N}}$ converging to zero we write $a_l\simeq b_l$, as $l\to\ty$,
if there exist positive constants $\tilde A<\tilde B$ such that $a_l/b_l\in[\tilde A,\tilde B]$ for all $l\in\mathbb{N}$. 

\smallskip

A spiral $r=f(\varphi)$ of focus type is said to be \textit{comparable} with the $\alpha$-\textit{power spiral} $r=\varphi^{-\alpha}$ if $f(\varphi)/|\varphi|^{-\alpha}\in[\tilde A,\tilde B]$ for some positive constants $\tilde A$ and $\tilde B$ and for all $\varphi\in [1,\infty[$ (resp. $\varphi\in ]-\infty,-1]$) if the spiral has positive (resp. negative) orientation. Similarly, we say that a spiral $r=f(\varphi)$ of focus type is comparable with the exponential spiral $r=e^{-\beta\varphi}$ if $f(\varphi)/e^{-\beta\varphi}\in[\tilde A,\tilde B]$ for positive constants $\tilde A$ and $\tilde B$, a positive (resp. negative) constant $\beta$ and for all $\varphi\in [0,\infty[$ (resp. $\varphi\in ]-\infty,0]$) if the spiral has positive (resp. negative) orientation. In Section \ref{section-degenerate-nn} (resp. Section \ref{section-gen-PC}) $(r,\varphi)$ denotes the standard (resp. generalized) polar coordinates. 
\smallskip

Let $\tilde\Gamma$ be a trajectory of the vector field \[-y\frac{\partial}{\partial x}+x\frac{\partial}{\partial y}\pm(x^2+y^2)^k\left( x\frac{\partial}{\partial x}+y\frac{\partial}{\partial y} \right), \ k\ge 0,\] near the origin $(x,y)=(0,0)$, expressed in (standard) polar coordinates as $r=f(\varphi)$. If $k\ge 1$, then $\tilde\Gamma$ is comparable with the power spiral $r=\varphi^{-1/2k}$ and $\dim_B\tilde\Gamma=\frac{4k}{2k+1}$. If $k=0$, then $\tilde\Gamma$ is comparable with the exponential spiral $r=e^{\pm\varphi}$ and hence $\dim_B\tilde\Gamma=1$.
In both cases $\tilde\Gamma$ is Minkowski measurable. See Theorem 9 of \cite{zuzu}.

\section{Box dimension of degenerate focus of type \boldmath ${(n,n)}$ and numerical examples}\label{section-main}

In Section \ref{section-degenerate-nn} we give a complete study of the box dimension of spiral trajectories of $X_n$. Section \ref{section-numerics-nn} is devoted to numerical examples. 

\subsection{The box dimension of degenerate focus}\label{section-degenerate-nn}
In this section we prove a result about the box dimension of trajectories of $X_n$ near the origin. We have  

\begin{theorem}\label{theorem-glavni} Let $n\ge 1$ be odd and let $\tilde{\Gamma}$ be a trajectory of $X_n$, given in (\ref{perturb-1}), near the origin. The following statements are true.
\begin{enumerate}
    \item If $k=0$, then the spiral $\tilde{\Gamma}$ is comparable with the exponential spiral $r=e^{\pm\frac{\varphi}{n}}$,
    $\dim_B\tilde{\Gamma}=1$, and $\tilde{\Gamma}$ is Minkowski measurable.
    \item If $k>0$, then the spiral $\tilde{\Gamma}$ is comparable with the power spiral $r=\varphi^{-1/2nk}$,
\bgeq\label{dimprvi}
\dim_B\tilde{\Gamma}=2-\frac{2}{1+2kn},
\endeq
and $\tilde{\Gamma}$ is Minkowski nondegenerate.
\end{enumerate}
\end{theorem}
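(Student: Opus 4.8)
The plan is to pass to standard polar coordinates $x=r\cos\varphi$, $y=r\sin\varphi$ and to organize everything around the first integral $H=x^{2n}+y^{2n}$ of the unperturbed field $X$. A direct computation gives $x\dot y-y\dot x=nH$ and $\dot H=\pm 2n^2(xy)^{n-1}H^{k+1}$. Hence $\dot\varphi=nH/r^2>0$, so $\varphi$ increases strictly along $\tilde\Gamma$ and the trajectory is a graph $r=f(\varphi)$; and since $n$ is odd the factor $(xy)^{n-1}\ge 0$ has constant sign, so $H$ is strictly monotone along $\tilde\Gamma$. Thus $\tilde\Gamma$ is a genuine spiral accumulating at the origin (stable or unstable according to the chosen sign), and $H$ plays the role of a monotone ``radial'' variable, even though $r=f(\varphi)$ itself need not be monotone.

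First I would write the orbital equation by dividing $\dot H$ by $\dot\varphi$, obtaining the separable equation \[\frac{dH}{d\varphi}=\pm 2n\,\frac{(\cos\varphi\sin\varphi)^{n-1}}{\cos^{2n}\varphi+\sin^{2n}\varphi}\,H^{k+1},\] which integrates in closed form. The one computation that has to be carried out is the period integral, which I would evaluate with the substitution $t=\tan\varphi$ followed by $w=t^{n}$, giving \[\int_0^{2\pi}\frac{(\cos\varphi\sin\varphi)^{n-1}}{\cos^{2n}\varphi+\sin^{2n}\varphi}\,d\varphi=\frac{2\pi}{n}.\] Because the integrand is $\tfrac{\pi}{2}$-periodic, its integral over \emph{any} interval of length $2\pi$ equals $2\pi/n$, so for $k>0$ the quantity $H^{-k}$ increases by exactly $4\pi k$ per turn, i.e. $H^{-k}(\varphi)=2k\varphi+O(1)$, while for $k=0$ one finds $\ln H(\varphi)=\pm 2\varphi+O(1)$. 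This yields $r=f(\varphi)\simeq\varphi^{-1/(2nk)}$ for $k>0$ and $f(\varphi)\simeq e^{\pm\varphi/n}$ for $k=0$, which are the asserted comparabilities.

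As stressed in the Introduction, this comparability is not by itself enough: for $n>1$ the factor $(\cos^{2n}\varphi+\sin^{2n}\varphi)^{-1/(2n)}$ makes $f$ oscillate within each turn, so $\tilde\Gamma$ is a wavy (non-monotone) spiral in the sense of \cite{wavy}. The decisive step is therefore to strip off this angular modulation by a bi-Lipschitz map. I would use $\Theta:\eR^2\to\eR^2$ given in polar coordinates by \[\Theta(r,\varphi)=\big((\cos^{2n}\varphi+\sin^{2n}\varphi)^{1/(2n)}\,r,\ \varphi\big),\] that is $\Theta(x,y)=\lambda(\varphi)\,(x,y)$ with $\lambda(\varphi)=(\cos^{2n}\varphi+\sin^{2n}\varphi)^{1/(2n)}\in[2^{(1-n)/(2n)},1]$. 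Since $\lambda$ depends only on the polar angle, $\Theta$ is positively homogeneous of degree one, smooth and nonvanishing on $\eR^2\setminus\{0\}$, with the same properties for $\Theta^{-1}$; hence $\Theta$ is a bi-Lipschitz homeomorphism of $\eR^2$. Its image $\Theta(\tilde\Gamma)$ is the spiral $\rho=H(\varphi)^{1/(2n)}$, which is now \emph{monotone} in $\varphi$ and whose radial variable obeys the affine law $\rho^{-2nk}=H^{-k}=2k\varphi+O(1)$ (resp. $\ln\rho=\pm\varphi/n+O(1)$ if $k=0$); in particular $\Theta(\tilde\Gamma)$ is a monotone spiral comparable to $r=\varphi^{-1/(2nk)}$ (resp. $r=e^{\pm\varphi/n}$) with comparable gaps between successive coils.

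Finally I would conclude by the bi-Lipschitz invariance of the box dimension and of Minkowski nondegeneracy recalled in Section \ref{section-definitions}: one has $\dim_B\tilde\Gamma=\dim_B\Theta(\tilde\Gamma)$, and the monotone, regularly-coiled spiral $\Theta(\tilde\Gamma)$ has box dimension and Minkowski nondegeneracy (resp. measurability) given by Theorem 9 of \cite{zuzu} and the classical $\alpha$-power spiral results of \cite{tricot}. This produces $\dim_B\tilde\Gamma=\frac{4kn}{2kn+1}=2-\frac{2}{1+2kn}$ together with nondegeneracy for $k>0$, and $\dim_B\tilde\Gamma=1$ with measurability for $k=0$. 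I expect the real work to sit in this last passage: the Introduction warns that comparability alone does not fix the box dimension, and indeed one must verify both that $\Theta$ remains bi-Lipschitz up to the origin and that the monotone model spiral genuinely attains the claimed dimension and nondegeneracy — which uses not merely the asymptotic order of $f$ but the exact, $\varphi$-independent increment $4\pi k$ of $H^{-k}$ per turn obtained in the second step.
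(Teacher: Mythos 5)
Your argument is correct and shares the paper's overall strategy (pass to polar coordinates, integrate the orbital equation in closed form, remove the angular modulation by a bi-Lipschitz radial map, and quote known results for model spirals), but the two key steps are organized genuinely differently. First, you integrate $H^{-k}$ (resp.\ $\ln H$) where $H=x^{2n}+y^{2n}$, instead of solving the Bernoulli equation for $r(\varphi)$ via $z=r^{-2nk}$ as the paper does; since $H^{-k}=r^{-2nk}(\cos^{2n}\varphi+\sin^{2n}\varphi)^{-k}$, this is the same computation in disguise, but it makes the constant $K=2k$ and the separation $H^{-k}=2k\varphi+P(\varphi)+C$ ($P$ periodic) appear with less bookkeeping, and your closed-form primitive giving the period integral $2\pi/n$ matches the paper's $\frac1n\arctan(\tan^n\tau)$ computation. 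Second, your map $\Theta(x,y)=\lambda(\varphi)(x,y)$ with $\lambda(\varphi)=(\cos^{2n}\varphi+\sin^{2n}\varphi)^{1/(2n)}$ is simpler than the paper's: being positively homogeneous of degree one and smooth on the unit circle, it is bi-Lipschitz on all of $\eR^2$ (this also follows from the paper's Lemma \ref{lema-bilip} with $H(r,\phi)=\lambda(\phi)$, since $\partial H/\partial r\equiv 0$). The price is that $\Theta(\tilde\Gamma)$ is the monotone spiral $\rho=(2k\varphi+P(\varphi)+C)^{-1/(2nk)}$ rather than the exact Tricot spiral $(\varphi+\hat C)^{-\alpha}$; the paper instead builds the $r$-dependent factor $(1+P_2(\varphi)r^{2nk})^{-1/(2nk)}$ into its radial map precisely so as to land on the exact power spiral and invoke only \cite{tricot} and \cite{DarkoMinkowski}.

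The one point you should tighten is the final citation. Theorem 9 of \cite{zuzu}, as recalled in Section \ref{section-definitions}, concerns trajectories of a specific normal-form vector field and does not apply to your model spiral; Tricot's result applies only to $r=\varphi^{-\alpha}$ exactly. What you need is the result of \cite{zuzu} for \emph{decreasing} spirals satisfying $f(\varphi)\simeq\varphi^{-\alpha}$, $f(\varphi)-f(\varphi+2\pi)\simeq\varphi^{-\alpha-1}$ and a suitable bound on $f'$ (Theorem 5 there), all of which your explicit formula does deliver — the gap estimate is exactly where the fixed increment $4\pi k$ of $H^{-k}$ per turn is used, and $\rho'(\varphi)=O(\varphi^{-\alpha-1})$ gives the derivative bound and, in the case $k=0$, the finite length needed for Minkowski measurability. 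Alternatively, you could follow the paper and compose $\Theta$ with a second radial map absorbing $P$, reducing to the exact power spiral. Either repair is routine; the proof is otherwise complete.
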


\begin{proof} 
We prefer to work with the system
\bgeq\label{degprvi}
\begin{array}{ccl}
\dot x&=&-y^{2n-1}\pm x^n y^{n-1} (x^{2n}+y^{2n})^k\\
\dot y&=&\phantom{-}x^{2n-1}\pm x^{n-1} y^n (x^{2n}+y^{2n})^k,
\end{array}
\endeq
obtained by dividing $X_n$ by $n$ (the trajectories near the origin remain unchanged after division by a nowhere zero factor).
It suffices to prove Theorem \ref{theorem-glavni} for \eqref{degprvi} with the negative sign (stable focus). The case with the positive sign in \eqref{degprvi} (unstable focus) can be reduced to the negative sign by reversing the time and applying the coordinate change $(x,y)\to (y,x)$. Thus, in the rest of the proof we focus on the system
\bgeq\label{degprvi+}
\begin{array}{ccl}
\dot x&=&-y^{2n-1}- x^n y^{n-1} (x^{2n}+y^{2n})^k\\
\dot y&=&\phantom{-}x^{2n-1}-x^{n-1} y^n (x^{2n}+y^{2n})^k ,
\end{array}
\endeq
with arbitrary but fixed trajectory $\tilde{\Gamma}$ near the origin, given by the initial condition $(x_0,y_0)\neq(0,0)$. In the polar coordinates $\Theta(r,\varphi)=(r\cos\varphi,r\sin\varphi)$ system (\ref{degprvi+}) becomes
\bgeq\label{degprvipolar}
\begin{array}{ccl}
	\dot r&=&r^{2n-1}\left(\cos^{2n-1}\varphi\sin\varphi-\sin^{2n-1}\varphi\cos\varphi\right) \\
	& & - r^{2nk+2n-1}\sin^{n-1}\varphi\cos^{n-1}\varphi\left(\sin^{2n}\varphi+\cos^{2n}\varphi\right)^k \\
	\dot \varphi&=&r^{2n-2}\left(\sin^{2n}\varphi+\cos^{2n}\varphi\right) 
\end{array}
\endeq
and the initial condition corresponds to $(r_0,\varphi_0)$, where $\varphi_0>0$. The spiral $\tilde{\Gamma}$ is given in polar coordinates by $r=\tilde{r}(\varphi)$, $\varphi\in[\varphi_0,\infty[$, and $\tilde{r}(\varphi_0)=r_0$. Dividing the first by the second equation in the system (\ref{degprvipolar}) we get the equation
\bgeq\label{bernoullieqn}
r'(\varphi)+p(\varphi)r(\varphi)=q(\varphi)r(\varphi)^{2nk+1} ,
\endeq
where functions $p$ and $q$ are given by
\bgeq\label{formula-p-q}
\begin{array}{ccl}
	p(\varphi)&=&\frac{\sin^{2n-1}\varphi\cos\varphi-\cos^{2n-1}\varphi\sin\varphi}{\sin^{2n}\varphi+\cos^{2n}\varphi}, \\
	q(\varphi)&=&-\sin^{n-1}\varphi\cos^{n-1}\varphi\left(\sin^{2n}\varphi+\cos^{2n}\varphi\right)^{k-1} .\end{array}
\endeq 
When $k=0$ (resp. $k>0$), in (\ref{bernoullieqn}) we deal with a linear equation (resp. Bernoulli differential equation).
First we prove Statement 2.\\
\textit{Statement 2.} Suppose that $k>0$. The idea of the proof is to construct a bi-Lipschitz equivalence between $\tilde{\Gamma}$ and some regular $\alpha$-power spiral \cite{tricot} (Section \ref{section-definitions}), with a well known box dimension, and to use the invariance of the box dimension under bi-Lipschitz mappings. We divide the proof into three parts. In the first part we explicitly find the function $\tilde{r}(\varphi)$. In the second part we introduce a radial map which we use to define the bi-Lipschitz map, and the third part is devoted to conclusions.

\textit{(a) Finding the spiral $r=\tilde{r}(\varphi)$.} Using substitution $z=r^{-2nk}$ we transform (\ref{bernoullieqn}) to the linear equation
\bgeq\label{lineareqn}
-\frac{z'(\varphi)}{2nk}+p(\varphi)z(\varphi)=q(\varphi).
\endeq
We solve (\ref{lineareqn}) using standard techniques and get the solution
\bgeq\label{solutionoflinear}
z(\varphi)=\left(\sin^{2n}\varphi+\cos^{2n}\varphi\right)^k(I(\varphi)+C),
\endeq
with $C\in\eR$ and
\bgeq\label{integralI}
I(\varphi)= 2nk\int\limits_0^{\varphi}\frac{\sin^{n-1}\tau\cos^{n-1}\tau}{\sin^{2n}\tau+\cos^{2n}\tau}\,d\tau.
\endeq
Now, we can write the function $I$ from (\ref{integralI}) in the form
\bgeq\label{integralIidentity}
I(\varphi)=K\varphi+P(\varphi) ,
\endeq
where $K:=I(2\pi)/(2\pi)$. Because $n$ is odd, it easily follows that $K>0$. The function $P$ is given by
\bgeq\label{funkcijaP}
P(\varphi) = 2nk\int\limits_{0}^{\varphi-2l\pi}\frac{\sin^{n-1}\tau\cos^{n-1}\tau}{\sin^{2n}\tau+\cos^{2n}\tau}\,d\tau - K(\varphi - 2l\pi),
\endeq
where $l=l(\varphi)$ is the largest integer such that $2l\pi\leq\varphi$, that is, $l = \lfloor \varphi/(2\pi) \rfloor$, and we used the $2\pi$-periodicity of the subintegral function. It follows from (\ref{integralIidentity}) and (\ref{funkcijaP}) that $P$ is an analytic and $2\pi$-periodic function, thus bounded.

Now respecting substitution $z=r^{-2nk}$, (\ref{solutionoflinear}) and (\ref{integralIidentity}) we get the solution of (\ref{bernoullieqn}),
\bgeq\label{solutionofbernoulli}
r(\varphi)=\left(\sin^{2n}\varphi+\cos^{2n}\varphi\right)^{-\frac{1}{2n}}(K\varphi+P(\varphi)+C)^{-\frac{1}{2nk}}.
\endeq
Introducing $K_2:=K^{-\frac{1}{2nk}}$, $P_2(\varphi):=P(\varphi)/K$ and $C_2:=C/K$ we rewrite (\ref{solutionofbernoulli}) in the form
\bgeq\label{solutionofbernoullinormalized}
r(\varphi)=K_2 \left(\sin^{2n}\varphi+\cos^{2n}\varphi\right)^{-\frac{1}{2n}}(\varphi+P_2(\varphi)+C_2)^{-\frac{1}{2nk}}.
\endeq
Thus, $\tilde{r}(\varphi)$ from the definition of $\tilde{\Gamma}$ is given by (\ref{solutionofbernoullinormalized}) with $C_2$
uniquely defined by the initial condition $(r_0,\varphi_0)$.

\textit{(b) Constructing a bi-Lipschitz map $S$.}
We take the $\alpha$-power spiral $\hat{\Gamma}$ from \cite{tricot}, given in polar coordinates by
\bgeq\label{powerspiral}
\hat{r}(\varphi)=(\varphi+\hat{C})^{-\alpha},
\endeq
with $\alpha:={\frac{1}{2nk}}\in ]0,1[$ and $\hat{C}:=C_2$.
We define a radial map $T:]0,\infty[\times[0,2\pi[\to ]0,\infty[\times[0,2\pi[$ by 
\bgeq\label{functionTdefinition}
T(r,\varphi):=(r H(r,\varphi),\varphi) ,
\endeq
where the map
$H:]0,\infty[\times[0,2\pi[\to ]0,\infty[$ is defined by
\bgeq\label{functionHdefinition}
H(r,\varphi):=K_2 \left(\sin^{2n}\varphi+\cos^{2n}\varphi\right)^{-\frac{1}{2n}}(1+P_2(\varphi)r^{2nk})^{-\frac{1}{2nk}} .
\endeq

The functions $T$ in (\ref{functionTdefinition}) and $H$ in (\ref{functionHdefinition}) are chosen in such a way that the composition $S:=\Theta\circ T\circ\Theta^{-1}:\mathbb{R}^2\setminus(0,0)\to\mathbb{R}^2\setminus(0,0)$ maps $\hat{\Gamma}$ to $\tilde{\Gamma}$, because $P_2(\varphi)$ is a $2\pi$-periodic function.

Also, notice that function $H$ in (\ref{functionHdefinition}) is bounded and bounded away from zero for $r$ sufficiently small and partial derivatives
\begin{align}
    \frac{\partial H}{\partial r}(r,\varphi)&=-r^{2nk-1}P_2(\varphi)(1+P_2(\varphi)r^{2nk})^{-1}H(r,\varphi) ,\nonumber\\
    \frac{\partial H}{\partial \varphi}(r,\varphi)&=\left(\sin^{2n}\varphi+\cos^{2n}\varphi\right)^{-1}(\cos^{2n-1}\varphi\sin\varphi-\cos\varphi\sin^{2n-1}\varphi)H(r,\varphi)\nonumber\\
    &- \frac{1}{2nk}r^{2nk}P_2'(\varphi)(1+P_2(\varphi)r^{2nk})^{-1}H(r,\varphi) \nonumber
\end{align}
exist and are bounded for sufficiently small $r$, as from (\ref{integralI}) and (\ref{integralIidentity}) it follows that $P_2'$ is also bounded. Now, all of the conditions of Lemma \ref{lema-bilip} are satisfied, so we see that function $S$ is a bi-Lipschitz map in a sufficiently small neighborhood of the origin. 

\textit{(c) Conclusions.} As box dimension and Minkowski nondegeneracy is invariant under the bi-Lipschitz mapping (see Section \ref{section-definitions}), it follows from \cite{tricot} and Theorem 6.1 in \cite{DarkoMinkowski} that
\bgeq
\dim_B\tilde{\Gamma} = \dim_B\hat{\Gamma} = 2 - \frac{2\alpha}{\alpha + 1} = 2-\frac{2}{1+2kn} ,
\endeq
and that $\tilde{\Gamma}$ is Minkowski nondegenerate. Since $P_2$ is bounded and the term in front of $(\varphi+\cdots)^{-1/2nk}$ in (\ref{solutionofbernoullinormalized}) is bounded and bounded away from zero, it is clear that the spiral $\tilde{\Gamma}$ is comparable with the power spiral $r=\varphi^{-1/2nk}$, as $\varphi\to\infty$. This completes the proof of Statement 2.

\textit{Statement 1.} Let $k=0$ and let $n\ge 1$ be odd. Using (\ref{bernoullieqn}) the spiral $\tilde{\Gamma}$ is given by 
\begin{equation}\label{k=0-1}
    \tilde{r}(\varphi)=Ce^{-\int_0^\varphi (p(\tau)-q(\tau))d\tau}
\end{equation}
where $C=r_0e^{\int_0^{\varphi_0} (p(\tau)-q(\tau))d\tau}$ and $p$ and $q$ are given in (\ref{formula-p-q}). Since the function $p$ is odd and $2\pi$-periodic, it follows that the integral $\int_0^\varphi p(\tau)d\tau$ is even and $2\pi$-periodic (thus, bounded). On the other hand, the integral $-\int_0^\varphi q(\tau)d\tau$ is equal to the integral given in (\ref{integralI}) and can therefore be written as (\ref{integralIidentity}), for some new positive constant $K:=-\int_0^{2\pi} q(\tau)d\tau/(2\pi)$ and a new $2\pi$-periodic function $P$ (for the exact value of $K$ see the end of the proof of Statement 1). Now, the expression in (\ref{k=0-1}) changes into
\begin{equation}\label{k=0-2}
    \tilde{r}(\varphi)=Ce^{-\left(K\varphi+R(\varphi)\right)}
\end{equation}
where $K$ is given above and the function $R$ is $2\pi$-periodic and thus bounded. It follows directly from (\ref{k=0-2}) that the spiral $\tilde{\Gamma}$ is comparable with the exponential spiral $r(\varphi)=e^{-K\varphi}$ as $\varphi\to \infty$. Further, the derivative of (\ref{k=0-1}) is equal to
\begin{equation}\label{k=0-3}
    \tilde{r}'(\varphi)=(q(\varphi)-p(\varphi))\tilde{r}(\varphi).
\end{equation}
Using (\ref{k=0-2}), (\ref{k=0-3}) and the fact that $p$ and $q$ are bounded, we get
\begin{equation}\label{k=0-4}
    |\tilde{r}'(\varphi)|\le Me^{-K\varphi}\nonumber
\end{equation}
where $M$ is a positive constant. This, together with (\ref{k=0-2}), implies that the length of the spiral $\tilde{\Gamma}$ is finite. Thus, $\dim_B\tilde{\Gamma}=1$ and $\tilde{\Gamma}$ is Minkowski measurable (see \cite{Falconer} and \cite[p. 106]{tricot}). Notice that the function $\tau\mapsto \frac{1}{n}\arctan\left(\tan^n \tau\right)$ is a primitive function of $-q(\tau)$, and this implies that $K=\frac{1}{n}$. This completes the proof of Statement 1.
\end{proof}

\begin{rem}\label{remark-period}
Notice that for $n$ even and $k>0$ (resp. $k=0$) it follows from (\ref{solutionofbernoulli}) (resp. (\ref{k=0-1}))  that $\tilde{r}(\varphi)$ is $2\pi$-periodic, so the system (\ref{degprvi+}) only has closed periodic trajectories and the fixed point at the origin is of center type.
\end{rem}

In the rest of this section we prove the following lemma used in the proof of Theorem \ref{theorem-glavni}.
\begin{lemma}\label{lema-bilip}
    Let $T:]0,\infty[\times\eR\to ]0,\infty[\times\eR$ be a radial map, i.e., \[T(r,\phi)=(r H(r,\phi),\phi),\] where $H:]0,\infty[\times\eR\to ]0,\infty[$ is a differentiable function. Suppose that there exist positive real constants $r_1$, $C_1$ and $C_2$ such that 
    \begin{itemize}
    \item[A.]  $C_1\leq H(r,\phi) \leq C_2$ for $r\in]0,r_1]$;
    \item[B.] the partial derivatives $\frac{\partial H}{\partial r}(r,\phi)$ and $\frac{\partial H}{\partial \phi}(r,\phi)$ are bounded for $r\in]0,r_1]$.
    \end{itemize}
    Then the map $S:\mathbb{R}^2\setminus(0,0)\to\mathbb{R}^2\setminus(0,0)$, defined by $S:=\Theta\circ T\circ\Theta^{-1}$, where $\Theta(r,\phi)=(r\cos\phi, r\sin\phi)$, has an inverse map $S^{-1}$ and both $S$ and $S^{-1}$ are Lipschitz maps in some punctured neighborhood of the origin.
\end{lemma}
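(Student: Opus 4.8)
The plan is to pass to Cartesian coordinates, where $S$ takes the transparent form $S(x,y)=H(r,\phi)\,(x,y)$ with $(r,\phi)$ the polar coordinates of $(x,y)$, and to prove that both $S$ and its inverse have uniformly bounded Jacobians on a punctured disk $\{0<r\le r_1\}$. Since such a punctured disk is quasiconvex (any two points can be joined by a path whose length approaches their Euclidean distance, by detouring around the origin), a uniform bound on the operator norm of the derivative upgrades, via integration along paths, to a global Lipschitz estimate on the punctured disk; the same reasoning applies to $S^{-1}$ on its image. Thus the whole problem reduces to two pointwise matrix estimates: an upper bound on $\|DS\|$ and a lower bound on $\det DS$ (which, together with the upper bound, controls $\|(DS)^{-1}\|$).

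First I would record the differential of $S$. Writing $\mathbf{x}=(x,y)^{T}$ and $\nabla H$ for the Cartesian gradient of $H$, the product rule gives the identity $DS = H\,I + \mathbf{x}\,(\nabla H)^{T}$, a scalar multiple of the identity plus a rank-one term. The only danger is that $\nabla H$ is singular at the origin: in polar form $|\nabla H|^{2}=(\partial_r H)^{2}+r^{-2}(\partial_\phi H)^{2}$, so $\nabla H$ blows up like $1/r$. The crucial observation is that in $DS$ the gradient always appears multiplied by $\mathbf{x}$, and $|\mathbf{x}|=r$: the operator norm of the rank-one term is exactly $|\mathbf{x}|\,|\nabla H|=\sqrt{(r\,\partial_r H)^{2}+(\partial_\phi H)^{2}}$, which is bounded on $]0,r_1]$ by hypotheses A and B. Hence $\|DS\|\le C_2+r_1\sup|\partial_r H|+\sup|\partial_\phi H|<\infty$, so $S$ is Lipschitz.

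For the inverse I would use the identity $\det(I+uv^{T})=1+v^{T}u$ to compute $\det DS = H^{2}+H\,(\nabla H\cdot\mathbf{x}) = H\,(H+r\,\partial_r H)$, where $\nabla H\cdot\mathbf{x}=r\,\partial_r H$ since $\mathbf{x}=r\hat r$. Because $H\ge C_1$ while $|r\,\partial_r H|\le r_1\sup|\partial_r H|$ can be made arbitrarily small by shrinking $r_1$, I can fix $r_1$ so that $H+r\,\partial_r H\ge C_1/2>0$; then $\det DS\ge C_1^{2}/2$ is bounded away from zero. Global injectivity of $S$ on the punctured disk follows because $S$ preserves the polar angle and, for each fixed $\phi$, the radial profile $r\mapsto rH(r,\phi)$ has derivative $H+r\,\partial_r H>0$, hence is strictly increasing; so $S^{-1}$ exists and sends a punctured neighbourhood of the origin to one. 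Finally, from $(DS)^{-1}=(\det DS)^{-1}\,\mathrm{adj}(DS)$ together with the bound on $\|DS\|$ and the lower bound on $\det DS$, the entries of $DS^{-1}$ are bounded, so by quasiconvexity $S^{-1}$ is Lipschitz as well.

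I expect the main obstacle to be precisely the coordinate singularity of the polar chart at the origin: the angular partial $\partial_\phi H$ enters the Cartesian derivatives through a factor $1/r$, so a naive estimate on $\|DS\|$ fails, and one must exploit the algebraic structure $DS=H\,I+\mathbf{x}\,(\nabla H)^{T}$ to see the compensating factor $|\mathbf{x}|=r$. A secondary point requiring care is that the statement is genuinely local and must be phrased on a punctured disk, since polar coordinates (and $S$ itself) are not defined at the origin; choosing $r_1$ small enough to force $\det DS>0$ is exactly what pins down the neighbourhood.
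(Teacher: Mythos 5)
Your proof is correct, and its analytic core coincides with the paper's: both arguments bound the Cartesian Jacobian of $S$ using hypotheses A and B, both hinge on the same cancellation (the angular derivative $\partial H/\partial\phi$ enters the Cartesian derivatives through a factor $1/r$ that is absorbed by $|\mathbf{x}|=r$), and both arrive at the same identity $\det DS = H\,(H+r\,\partial H/\partial r)$, bounded away from zero after shrinking $r_1$. The differences are in packaging. Where you write $DS=H\,I+\mathbf{x}\,(\nabla H)^{T}$ and estimate the rank-one term by $|\mathbf{x}|\,|\nabla H|$, the paper computes all four partial derivatives $\partial S_i/\partial x_j$ explicitly on a half-plane (using the branch $\Theta^{-1}(x,y)=(\sqrt{x^2+y^2},\arctan(y/x))$) and checks term by term that each is bounded using $x,|y|\le r$; your formulation makes the compensation mechanism more transparent and shorter. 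Where you pass from a pointwise derivative bound to a global Lipschitz estimate via quasiconvexity of the punctured disk, the paper instead covers a punctured neighbourhood by finitely many rotated half-planes, on each of which the relevant domain is convex so the mean value inequality applies directly; both devices address the same non-convexity issue, and yours avoids the (slightly glossed-over) step of gluing finitely many bi-Lipschitz pieces. You also supply an explicit injectivity argument (angle preservation plus strict monotonicity of $r\mapsto rH(r,\phi)$, whose derivative is $H+r\,\partial H/\partial r>0$) where the paper simply asserts that $T^{-1}$ exists and invokes the inverse function theorem; your version has the added benefit of showing that the image of $S$ contains a punctured neighbourhood of the origin, so that $S^{-1}$ is indeed defined where claimed. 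One small point worth making explicit in a final write-up: for $h:=H\circ\Theta^{-1}$ to be a single-valued function on the punctured plane you need $H$ to be $2\pi$-periodic in $\phi$ (as it is in the application in Theorem \ref{theorem-glavni}); the paper sidesteps this by working branch-by-branch on half-planes, and your argument should either assume periodicity or be read locally on such a branch.
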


\begin{proof}
It is easy to see that $T^{-1}$ exists and $S^{-1}=\Theta\circ T^{-1}\circ\Theta^{-1}$. To finish the proof we need to see that all of the components of the Jacobian matrices $J_S$ and $J_{S^{-1}}$ are bounded. It is sufficient to prove that $S$ and $S^{-1}$ are Lipschitz maps on the intersection of some punctured neighborhood of the origin and the half-plane $\mathbb{R^+}\times\mathbb{R}$. As the rotation in the plane is isometry, it follows that $S$ and $S^{-1}$ are Lipschitz maps on the intersection of some punctured neighborhood of the origin and any half-plane given by an arbitrary rotation around the origin of the $\mathbb{R^+}\times\mathbb{R}$ half-plane. So, by rotating this half-plane around the origin, we can cover a neighborhood of the origin by a finite number of such bi-Lipschitz maps, which proves the Lemma.

We proceed by computing partial derivatives of $S=(S_1,S_2)$ in respect to variables $x$ and $y$, on a half-plane $\mathbb{R^+}\times\mathbb{R}$, where $\Theta^{-1}$ is explicitly given by
\[
\Theta^{-1}(x,y)=\left(\sqrt{x^2+y^2}, \arctan\left(\frac{y}{x}\right)\right) .
\]
After meticulous calculation we get
\begin{align}
\label{lemmaizraz1}
\frac{\partial S_1}{\partial x}(x,y) & = H(r,\phi)\left(\frac{x}{r}\cos\phi + \frac{y}{r}\sin\phi\right) - \frac{\partial H}{\partial\phi}(r,\phi)\frac{y}{r}\cos\phi + \frac{\partial H}{\partial r}(r,\phi)x\cos\phi ,\\
\frac{\partial S_1}{\partial y}(x,y) & = H(r,\phi)\left(\frac{y}{r}\cos\phi - \frac{x}{r}\sin\phi\right) + \frac{\partial H}{\partial\phi}(r,\phi)\frac{x}{r}\cos\phi + \frac{\partial H}{\partial r}(r,\phi)y\cos\phi ,\\
\frac{\partial S_2}{\partial x}(x,y) & = H(r,\phi)\left(-\frac{y}{r}\cos\phi + \frac{x}{r}\sin\phi\right) - \frac{\partial H}{\partial\phi}(r,\phi)\frac{y}{r}\sin\phi + \frac{\partial H}{\partial r}(r,\phi)x\sin\phi ,\\
\label{lemmaizraz4}
\frac{\partial S_2}{\partial y}(x,y) & = H(r,\phi)\left(\frac{x}{r}\cos\phi + \frac{y}{r}\sin\phi\right) + \frac{\partial H}{\partial\phi}(r,\phi)\frac{x}{r}\sin\phi + \frac{\partial H}{\partial r}(r,\phi)y\sin\phi ,
\end{align}
where we write $r:=\sqrt{x^2+y^2}$ and $\phi:=\arctan\left(\frac{y}{x}\right)$. As $x,|y|\leq r$, from assumptions A and B we see that terms in expressions (\ref{lemmaizraz1})--(\ref{lemmaizraz4}) are bounded on the punctured ball $\overline{B}(0;r_1)$, so it follows that all components of the Jacobian $J_S$ are bounded on the punctured ball $\overline{B}(0;r_1)$.

To prove the boundness of $J_{S^{-1}}$, we use the inverse function theorem. It suffices to see that all of the components of the matrix $[J_S]^{-1}$ are bounded. This follows by an elementary computation using well known formula for the inverse of $2\times 2$ matrix and using the fact that
$$
\det J_S = H(r,\phi)\left(H(r,\phi) + r\frac{\partial H}{\partial r}(r,\phi)\right)
$$
is bounded away from zero for sufficiently small $r$, using assumptions A and B.\end{proof}

\subsection{Numerical examples}\label{section-numerics-nn}

In this section we develop a numerical estimate of the box dimension of trajectories of $X_n$ near the origin. The exact theoretical result has already been proven in the Section \ref{section-degenerate-nn}. Here we verify that result numerically.

\subsubsection{Numerical estimate of the box dimension}

Our goal here is to design a numerical scheme for estimating the box dimension of trajectory $\tilde{\Gamma}$ of $X_n$, from Theorem \ref{theorem-glavni}. We proceed by dividing trajectory $\tilde{\Gamma}$ to $\num$ disjoint parts $\tilde{\Gamma}_j$ such that
\[
\tilde{\Gamma}_j := \tilde{\Gamma} \cap \mathcal{K}_j , \quad \forall j,\ 1\leq j\leq\num ,
\]
where $\mathcal{K}_j$ is an unbounded circular sector between angles $\frac{2(j-1)\pi}{\num}$ and $\frac{2j\pi}{\num}$.

Next, we estimate the box dimension $\mathcal{D}_j$ of set $\tilde{\Gamma}_j$ for a fixed $j$. The first step is to estimate the area of $\epsilon$-neighborhood of $\tilde{\Gamma}_j$, which we designate by $|\tilde{\Gamma}_{j,\epsilon}|$. Notice that $\tilde{\Gamma}_j$ is actually a disjoint union of countably many curves of finite length, each curve spanning between two rays:
\begin{align*}
 R_{j-1}\ \ldots\ & \varphi=\frac{2(j-1)\pi}{\num} , \\
 R_j\ \ldots\ & \varphi=\frac{2j\pi}{\num} .
\end{align*}
For the purpose of computing $|\tilde{\Gamma}_{j,\epsilon}|$, from (\ref{solutionofbernoullinormalized}) it follows that for sufficiently large $\num$ and sufficiently close to the origin, we can successfully approximate these countably many curves with parts of circular arcs between rays $R_{j-1}$ and $R_j$. We denote this set of arcs with $\mathcal{A}_j$, so it approximately holds that $|\tilde{\Gamma}_{j,\epsilon}|\approx |\mathcal{A}_{j,\epsilon}|$, for any small $\epsilon>0$, that is, the area of $\epsilon$-neighborhood of $\tilde{\Gamma}_j$ is approximately the same as the area of $\epsilon$-neighborhood of $\mathcal{A}_j$. To get the radii of these circular arcs we compute $r(\varphi)$ from (\ref{solutionofbernoullinormalized}), by taking $\varphi=\varphi_j+2k\pi$, for every $k\in\mathbb{N}_0$, where $\varphi_j:=\frac{2j\pi}{\num}$. By using $2\pi$-periodicity of trigonometric functions and function $P_2$, from (\ref{solutionofbernoullinormalized}) we get
\bgeq\label{eq:radius_by_k}
r(\varphi)=K_2 \left(\sin^{2n}\varphi_j+\cos^{2n}\varphi_j\right)^{-\frac{1}{2n}}(\varphi_j+2k\pi+P_2(\varphi_j)+C_2)^{-\frac{1}{2nk}}.
\endeq
Next, we define new constants:
\begin{align*}
    \alpha & :=\frac{1}{2nk} ,\\
    K_5 & := \frac{\varphi_j+P_2(\varphi_j)+C_2}{2\pi} ,\\
    K_6 & := K_2 \left(\sin^{2n}\varphi_j+\cos^{2n}\varphi_j\right)^{-\frac{1}{2n}}2\pi^{-\alpha} ,
\end{align*}
so we can rewrite (\ref{eq:radius_by_k}) as
\bgeq\label{eq:radius_by_k_short}
r(\varphi)=K_6 (k+K_5)^{-\alpha}.
\endeq
Now each $k\in\mathbb{N}_0$ indexes a single circular arc having the radius equal to
\bgeq\label{eq:radius_k}
r_k := r(\varphi_j+2k\pi) = K_6 (k+K_5)^{-\alpha} .
\endeq

We generalize the approach from \cite{tricot}, where the concept of nucleus and tail part of $\epsilon$-neighborhood was introduced. For a fixed $\epsilon>0$, we define critical $k_1=k_1(\epsilon)$ as the smallest integer $k$ such that $r_k - r_{k+1} \leq 2\epsilon$. This critical $k_1$ divides the tail part from the nucleus part of the $\epsilon$-neighbourhood of the set $\mathcal{A}_j$. The tail part corresponds to $\epsilon$-neighbourhood of all arcs from $\mathcal{A}_j$ having indices $k\leq k_1$. These arcs all have disjoint $\epsilon$-neighbourhoods. On the contrary, the nucleus part corresponds to $\epsilon$-neighbourhood of all arcs from $\mathcal{A}_j$ having indices $k > k_1$. These arcs have overlapping $\epsilon$-neighborhoods.

To proceed, we first solve the equation $r_{\beta} - r_{\beta+1}=2\epsilon$, allowing $\beta\geq 0$ to be a real number, and then taking $k_1:=\lceil\beta\rceil$. We rewrite the equation as
\bgeq\label{eq:func_f_diff}
f(\beta+K_5)-f((\beta+K_5)+1)=\frac{2\epsilon}{K_6} ,
\endeq
where $f(x)=x^{-\alpha}$. As function $f$ is an analytic function for all $x>0$, and we can make $K_5$ arbitrarily large by choosing the initial condition close to the origin and $\epsilon$ close to $0$, we can develop the second term on the left hand side of (\ref{eq:func_f_diff}) in Taylor series around point $x_0=\beta + K_5$.
By taking only the first two terms in this Taylor series, we get an approximate form of equation (\ref{eq:func_f_diff}),
\bgeq\label{eq:jedn_diff_approx}
\alpha(\beta + K_5)^{-\alpha-1} \approx \frac{2\epsilon}{K_6} ,
\endeq
which can easily be solved. Notice that it is not possible to find solution $\beta$ of the original equation (\ref{eq:func_f_diff}) in closed form. We get the approximate solution $\beta$ and compute
\bgeq\label{eq:jedn_diff_approx_solved}
k_1=\lceil\beta\rceil = \left\lceil\left(\frac{\alpha K_6}{2\epsilon}\right)^{\frac{1}{\alpha+1}}-K_5\right\rceil .
\endeq

Last, we estimate the area of the tail and nucleus part of $|\mathcal{A}_{j,\epsilon}|$. The sum of this  tail and nucleus is then approximately equal to $|\tilde{\Gamma}_{j,\epsilon}|$.

To get the area of the tail part we first have to compute the sum $\mathcal{S}_j := \sum\limits_{k=0}^{k_1} r_k$. As $k_1$ could be very large and thus impossible for exact summation, we approximate $\mathcal{S}_j$ using method of the interval test for convergence, where we compute the approximation error to be smaller than $K_6/(K_5)^{\alpha}$. We can take $K_5$ to be sufficiently large, as before, so the approximation error can be made arbitrarily small. For sum $\mathcal{S}_j$, we get
\bgeq\label{eq:sum_of_orc_lengths}
\mathcal{S}_j \approx K_6\int\limits_{0}^{k_1+1} \frac{dt}{(t+K_5)^{\alpha}} = K_6\frac{(k_1+1+K_5)^{1-\alpha}-K_5^{1-\alpha}}{1-\alpha} .
\endeq
The area of the tail part is equal to the sum of $\epsilon$-neighborhoods of all circular arcs in the tail part of $\mathcal{A}_j$. We approximate $\epsilon$-neighborhood of every circular arc in tail part arc using the $\epsilon$-neighborhood of a segment having the same length as the arc. As all of this $\epsilon$-neighborhoods are disjoint, we compute the total sum of this $\epsilon$-neighborhoods to be $\mathcal{S}_j(2\pi/\num)\cdot 2\epsilon + (k_1+1)\epsilon^2$. On the other hand, the area of the nucleus part is equal to $(r_{k_1})^2\pi/\num$. Finally, using (\ref{eq:sum_of_orc_lengths}) we get
\bgeq\label{eq:curve_eps_neighb_area}
|\tilde{\Gamma}_{j,\epsilon}| \approx \left(K_6\frac{(k_1+1+K_5)^{1-\alpha}-K_5^{1-\alpha}}{1-\alpha}\right)\frac{4\pi}{\num}\epsilon + (k_1+1)\epsilon^2 + (r_{k_1})^2\frac{\pi}{\num} .
\endeq

We estimate the box dimension of set $\tilde{\Gamma}_i$ using a standard formula from \cite{Falconer},
\bgeq\label{eq:box_dim_formula}
\mathcal{D}_j :=  \dim_{B}\tilde{\Gamma}_j= 2 - \lim\limits_{\epsilon\to 0}\frac{\log |\tilde{\Gamma}_{j,\epsilon}|}{\log \epsilon} .
\endeq
Precisely, we approximate $\mathcal{D}_j$ by evaluating (\ref{eq:box_dim_formula}) for some fixed $\epsilon_0>0$, that is very close to zero,
\bgeq\label{eq:box_dim_formula_approx}
\mathcal{D}_j \approx 2 - \frac{\log |\tilde{\Gamma}_{j,\epsilon_0}|}{\log \epsilon_0} .
\endeq

Using the area of the tail and the nucleus part from (\ref{eq:curve_eps_neighb_area}) and substituting to (\ref{eq:box_dim_formula_approx}), we compute
\bgeq\label{eq:box_dim_formula_approx_final_form}
\mathcal{D}_j \approx 2 - \frac{\log\left[ \left(\frac{(k_1+1+K_5)^{1-\alpha}-K_5^{1-\alpha}}{1-\alpha}\right)\frac{4\pi}{\num}\epsilon_0+ (k_1+1)\epsilon_0^2 + (r_{k_1})^2\frac{\pi}{\num} \right]}{\log \epsilon_0} .
\endeq

Finally, we use estimates $\mathcal{D}_j$ from (\ref{eq:box_dim_formula_approx}) of the box dimension of $\tilde{\Gamma}_j$ to estimate upper and lower bounds on the box dimension of $\tilde{\Gamma}$. We assume that $\dim_{B}\tilde{\Gamma}_j$ exists and is approximately equal to previously obtained $\mathcal{D}_j$. We use monotonicity of the upper and lower box dimension, and also finite stability of the upper box dimension (see \cite{Falconer}) to compute required estimates. For the estimate on the upper bound, we have
\begin{equation}
 \label{eq:dimlesssim}
 \overline\dim_{B}\tilde{\Gamma} = \max_{1\leq j\leq \num} \overline\dim_{B}\tilde{\Gamma}_j \approx \max_{1\leq j\leq \num}\mathcal{D}_j .
\end{equation}
For the lower bound we first see that
\[
\mathcal{D}_j\approx \underline\dim_{B}\tilde{\Gamma}_j \leq  \underline\dim_{B}\tilde{\Gamma}, \quad \forall j,\ 1\leq j\leq\num,
\]
from which follows that
\begin{equation}
 \label{eq:simlessdim}
 \max_{1\leq j\leq \num}\mathcal{D}_j \approx  \max_{1\leq j\leq \num} \underline\dim_{B}\tilde{\Gamma}_j \leq  \underline\dim_{B}\tilde{\Gamma} .
\end{equation}
Using estimates (\ref{eq:simlessdim}) and (\ref{eq:dimlesssim}), as $\underline\dim_{B}\tilde{\Gamma}\leq\dim_{B}\tilde{\Gamma}\leq\overline\dim_{B}\tilde{\Gamma}$, we finally see that
\begin{equation}
 \label{eq:dimsim}
 \dim_{B}\tilde{\Gamma} \approx \max_{1\leq j\leq \num}\mathcal{D}_j .
\end{equation}

\subsubsection{Implementation details and test results}

We implemented the algorithm for computing our numerical estimate of the box dimension using Wolfram Mathematica, version 12. See \url{https://github.com/FRABDYN/DegenerateSpirals} where our code is available for download.

All computations have been done symbolically, using exact fractions where appropriate, having essentially infinite numerical precision. The box dimension results were only numericalized in step (\ref{eq:box_dim_formula_approx_final_form}) when computing $\mathcal{D}_j$.

\begin{table}[htb]
\begin{center}
\begin{tabular}{|r|c|l|l|c|}
\hline
$n$ & $k$ & \multicolumn{2}{c|}{\begin{tabular}[c]{@{}c@{}}theoretical\\ dimension\end{tabular}} & \begin{tabular}[c]{@{}c@{}}numerical\\ dimension\end{tabular} \\ \hline
3   & 2   & $24/13$                                   & $1.84615$                                & $1.84593$                                                     \\ \hline
3   & 11  & $132/76$                                  & $1.97015$                                & $1.96992$                                                     \\ \hline
11  & 2   & $88/45$                                   & $1.95556$                                & $1.95534$                                                     \\ \hline
11  & 11  & $484/243$                                 & $1.99177$                                & $1.99155$                                                     \\ \hline
\end{tabular}
\end{center}
\caption{Theoretical and numerical box dimensions computed for different values of $n$ and $k$.}
\label{tablica_dims}
\end{table}

We present numerical computations for few different examples having different values of $n$ and $k$ in Table \ref{tablica_dims}. For all test cases we used the initial condition values of $r_0=1/10$ and $\varphi_0=0$, corresponding to the constant $C$ from (\ref{solutionoflinear}) equal to $100^{nk}$. By computing the integral $I$ from (\ref{integralI}) we get that constant $K=2k$. Furthermore, to speed up the calculation we approximate constant $K_5$ by $C_2/(2\pi)$ as $\varphi_j+P_2(\varphi_j)$ is at least $10^{10}$ times less than $C_2$.

For all test cases we decided to use $\epsilon=10^{-10000}$ and $L=1000$. Using this values we produced results having high numerical precision compared the the theoretical result from Theorem \ref{theorem-glavni} and computations last no more than few seconds on a modern PC computer. For even higher precision, value of $\epsilon$ could be further decreased and $L$ increased, which would increase the computation time.

\section{Box dimension of degenerate focus of type \boldmath $(m,n)$}\label{section-main1}
In Section \ref{section-gen-PC} we introduce generalized polar coordinates and show that for $k=0$ the box dimension of spiral trajectories of $X_{m,n}$ is equal to one (Theorem \ref{theorem-glavni-generalized}).

\subsection{Generalized polar coordinates}\label{section-gen-PC}

We study the box dimension of spiral trajectories of  $X_{m,n}$--given in (\ref{perturb-2})--near the origin. We write the system as
\bgeq\label{degprvi-general}
\begin{array}{ccl}
\dot x&=&-ny^{2n-1}\pm nx^m y^{n-1} (x^{2m}+y^{2n})^k\\
\dot y&=&\phantom{-}m x^{2m-1}\pm m x^{m-1} y^n (x^{2m}+y^{2n})^k.  
\end{array}
\endeq
We introduce (see \cite{gt1}) the $(n,m)$--polar coordinates $(x,y)=(r^n\text{Cs}(\varphi),r^m \text{Sn}(\varphi))$ where $\text{Cs}(\varphi)$ and $\text{Sn}(\varphi)$ are a generalization of $\cos\varphi$ and $\sin\varphi$, and satisfy
\[\dot{\text{Cs}}(\varphi)=-n \text{Sn}^{2n-1}(\varphi), \ \dot{\text{Sn}}(\varphi)=m \text{Cs}^{2m-1}(\varphi)\]
and $(\text{Cs}(0),\text{Sn}(0))=(1,0)$. Notice that $\text{Cs}^{2m}(\varphi)+\text{Sn}^{2n}(\varphi)=1$, $\text{Cs}(\varphi)$ (resp. $\text{Sn}(\varphi)$) is even (resp. odd) and both are $T$-periodic, with 
\[T=\frac{2}{mn}\frac{\Gamma(\frac{1}{2m})\Gamma(\frac{1}{2n})}{\Gamma(\frac{1}{2m}+\frac{1}{2n})},\]
where $\Gamma$ is the gamma function. In these polar coordinates system (\ref{degprvi-general}) becomes
\bgeq\label{bernoullieqn-generalized}
\frac{dr}{d\varphi}=\pm \text{Sn}^{n-1}(\varphi)\text{Cs}^{m-1}(\varphi)r^{2mnk+1},
\endeq
upon division of $\dot{r}$ by $\dot{\varphi}$.
We use the following simple lemma (see \cite{gt1}) in the proof of Theorem \ref{theorem-glavni-generalized}.
\begin{lemma}\label{lemma-pol-cor}
Let $T$ be the period of the functions $\text{Cs}(\varphi)$ and $\text{Sn}(\varphi)$. The following statements are true.
\begin{enumerate}
    \item  If either $m$ or $n$ is even, then $\int_0^T\text{Sn}^{n-1}(\varphi)\text{Cs}^{m-1}(\varphi)d\varphi=0$.
    \item If both $m$ and $n$ are odd, then $\int_0^T\text{Sn}^{n-1}(\varphi)\text{Cs}^{m-1}(\varphi)d\varphi=\frac{2\pi}{mn}$.
\end{enumerate}
\end{lemma}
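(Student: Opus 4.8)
The plan is to treat the two statements by complementary methods: a symmetry argument for the vanishing in Statement 1, and an explicit angle function combined with a winding-number count for Statement 2.

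For Statement 1, I would exploit the reflection symmetries of the generalized trigonometric pair. Besides the evenness of $\text{Cs}$ and oddness of $\text{Sn}$ already recorded, the pair satisfies $\text{Cs}(T/2-\varphi)=-\text{Cs}(\varphi)$ and $\text{Sn}(T/2-\varphi)=\text{Sn}(\varphi)$; this follows by checking that $\varphi\mapsto(-\text{Cs}(T/2-\varphi),\text{Sn}(T/2-\varphi))$ solves the same defining ODE system with the same initial data $(1,0)$ and invoking uniqueness. Writing $g(\varphi):=\text{Sn}^{n-1}(\varphi)\text{Cs}^{m-1}(\varphi)$, one computes $g(-\varphi)=(-1)^{n-1}g(\varphi)$ and $g(T/2-\varphi)=(-1)^{m-1}g(\varphi)$. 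If $n$ is even, then $g$ is odd under $\varphi\mapsto-\varphi$, and since integrating over a full period equals integrating over $[-T/2,T/2]$, the integral vanishes. If $n$ is odd and $m$ is even, then $g(T/2-\varphi)=-g(\varphi)$; substituting $\psi=T/2-\varphi$ and using $T$-periodicity gives $\int_0^T g=(-1)^{m-1}\int_0^T g=-\int_0^T g$, so again the integral is $0$. These two cases together exhaust ``$m$ or $n$ even''.

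For Statement 2, the key observation is that, because both $m$ and $n$ are odd, the map $(a,b)\mapsto(a^m,b^n)$ is an orientation-preserving homeomorphism of $\mathbb{R}^2$ fixing the origin, and the point $P(\varphi):=(\text{Cs}^m(\varphi),\text{Sn}^n(\varphi))$ lies on the unit circle by the identity $\text{Cs}^{2m}+\text{Sn}^{2n}=1$. I would introduce the angle $\Phi(\varphi)$ of $P(\varphi)$ and show, using $A^2+B^2=1$ with $A=\text{Sn}^n$, $B=\text{Cs}^m$ together with the defining ODEs $\dot{\text{Cs}}=-n\,\text{Sn}^{2n-1}$, $\dot{\text{Sn}}=m\,\text{Cs}^{2m-1}$, that $\Phi'=A'B-AB'=mn\,\text{Sn}^{n-1}(\varphi)\text{Cs}^{m-1}(\varphi)$. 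Hence $mn\int_0^T\text{Sn}^{n-1}\text{Cs}^{m-1}\,d\varphi$ equals the total increment of the continuously lifted angle of $P$ over one period, i.e. $2\pi$ times the winding number of the loop $P$ about the origin.

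It then remains to show this winding number equals $1$. Since $(\text{Cs}(\varphi),\text{Sn}(\varphi))$ traverses the simple closed curve $\{\text{Cs}^{2m}+\text{Sn}^{2n}=1\}$ once counterclockwise over a period (the orbit encircles the origin and $\dot{\text{Sn}}(0)=m>0$ fixes the orientation), applying the orientation-preserving homeomorphism $(a,b)\mapsto(a^m,b^n)$ yields again a simple closed curve, namely the unit circle, traversed once counterclockwise, whose winding number is $1$. Therefore $mn\int_0^T\text{Sn}^{n-1}\text{Cs}^{m-1}\,d\varphi=2\pi$, giving the claimed value $\frac{2\pi}{mn}$. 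The main obstacle is precisely the global bookkeeping in Statement 2: the naive antiderivative $\frac{1}{mn}\arctan(\text{Sn}^n/\text{Cs}^m)$ is discontinuous at the zeros of $\text{Cs}$, so one must pass to the continuous angular lift and argue topologically (equivalently, add up the four $\pi/2$ increments across the quarter-periods) rather than evaluating the antiderivative at the endpoints.
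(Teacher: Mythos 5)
The paper does not actually prove Lemma~\ref{lemma-pol-cor}: it is stated as a known fact with a citation to the reference on generalized polar coordinates, so there is no in-paper argument to compare against. Your proof is correct and self-contained, and both halves check out. For Statement~1, the reflection identities $\text{Cs}(T/2-\varphi)=-\text{Cs}(\varphi)$, $\text{Sn}(T/2-\varphi)=\text{Sn}(\varphi)$ do hold (the candidate pair satisfies $u'=-nv^{2n-1}$, $v'=mu^{2m-1}$ because $2m-1$ is odd), and together with the parity of $\text{Cs}$, $\text{Sn}$ they kill the integral in both subcases. For Statement~2, the computation $\Phi'=BA'-AB'=mn\,\text{Sn}^{n-1}\text{Cs}^{m-1}$ with $B=\text{Cs}^m$, $A=\text{Sn}^n$ is exactly right (the cross terms recombine via $\text{Cs}^{2m}+\text{Sn}^{2n}=1$), and the winding-number count is the correct way to globalize past the zeros of $\text{Cs}$; the $m=n=1$ case reduces to $\int_0^{2\pi}d\varphi=2\pi$, consistent with the claimed $\tfrac{2\pi}{mn}$. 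The one step you should make explicit is the verification of the initial data in the uniqueness argument: checking that $(-\text{Cs}(T/2-\varphi),\text{Sn}(T/2-\varphi))$ starts at $(1,0)$ requires knowing $\text{Cs}(T/2)=-1$ and $\text{Sn}(T/2)=0$, which is not among the facts the paper records. It follows in one line from the equivariance of the defining system under $(a,b)\mapsto(-a,-b)$ (again using that $2m-1$ and $2n-1$ are odd), which forces the transit time from $(1,0)$ to $(-1,0)$ along the oval $a^{2m}+b^{2n}=1$ to be exactly $T/2$; add that line and the proof is complete. As an aside, your parenthetical alternative for Statement~2 (summing the four quarter-period increments of $\tfrac{1}{mn}\arctan(\text{Sn}^n/\text{Cs}^m)$, each equal to $\tfrac{\pi}{2mn}$) is the more elementary route and matches the device the authors themselves use at the end of the proof of Theorem~\ref{theorem-glavni}, where $\tfrac1n\arctan(\tan^n\tau)$ serves as a primitive.
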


\begin{theorem}\label{theorem-glavni-generalized} Let $T$ be the period of the functions $\text{Cs}(\varphi)$ and $\text{Sn}(\varphi)$. If both $m$ and $n$ are odd, then the following statements are true for a spiral trajectory $\tilde{\Gamma}$ of (\ref{degprvi-general}) near the origin.
\begin{enumerate}
    \item If $k=0$, then the spiral $\tilde{\Gamma}$ is comparable with the exponential spiral $r=e^{\pm \frac{2\pi}{T m n}\varphi}$,
    $\dim_B\tilde{\Gamma}=1$, and $\tilde{\Gamma}$ is Minkowski measurable.
    \item If $k>0$, then spiral $\tilde{\Gamma}$ is comparable with the power spiral $r=\varphi^{-1/2mnk}$.
\end{enumerate}
\end{theorem}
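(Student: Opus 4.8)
The plan is to integrate the reduced equation (\ref{bernoullieqn-generalized}) explicitly in both regimes, exactly as in the proof of Theorem \ref{theorem-glavni}, the crucial simplification being that in $(n,m)$--polar coordinates there is no linear term: (\ref{bernoullieqn-generalized}) is a \emph{pure} Bernoulli (resp.\ separable) equation. Write $g(\varphi):=\text{Sn}^{n-1}(\varphi)\text{Cs}^{m-1}(\varphi)$, which is $T$--periodic. Since $m,n$ are odd, Lemma \ref{lemma-pol-cor}(2) gives $\int_0^T g(\tau)\,d\tau=\frac{2\pi}{mn}$, so that
\[
\int_0^\varphi g(\tau)\,d\tau=\frac{2\pi}{Tmn}\,\varphi+P(\varphi),
\]
where $P(\varphi):=\int_0^\varphi g-\frac{2\pi}{Tmn}\varphi$ is $T$--periodic, hence bounded. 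This decomposition --- linear growth plus a bounded periodic remainder --- is the engine for both statements. Throughout I treat the orientation for which the trajectory spirals into the origin as $\varphi\to+\infty$ (one choice of sign); the opposite orientation is identical with $\varphi\to-\infty$.

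For Statement 2 ($k>0$) I would solve (\ref{bernoullieqn-generalized}) by the substitution $z:=r^{-2mnk}$, which turns it into $z'(\varphi)=\mp 2mnk\,g(\varphi)$. Integrating and using the decomposition above yields $z(\varphi)=\frac{4\pi k}{T}\varphi+b(\varphi)$ with $b$ bounded (the sign being fixed by requiring $z\to+\infty$, i.e.\ $r\to 0$). Hence
\[
\tilde r(\varphi)=\Big(\tfrac{4\pi k}{T}\varphi+b(\varphi)\Big)^{-1/2mnk},
\]
and since $\tilde r(\varphi)\,\varphi^{1/2mnk}=\big(\tfrac{4\pi k}{T}+b(\varphi)/\varphi\big)^{-1/2mnk}\to(\tfrac{4\pi k}{T})^{-1/2mnk}$ is bounded and bounded away from zero on $[\varphi_0,\infty)$, the spiral is comparable with $r=\varphi^{-1/2mnk}$. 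This is all Statement 2 claims, and deliberately so (see the final paragraph).

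For Statement 1 ($k=0$) equation (\ref{bernoullieqn-generalized}) is separable, giving $\tilde r(\varphi)=Ce^{\pm(\frac{2\pi}{Tmn}\varphi+P(\varphi))}$; choosing the sign so that $\tilde r\to 0$ and using boundedness of $P$ shows $\tilde r(\varphi)/e^{-\frac{2\pi}{Tmn}\varphi}\in[\tilde A,\tilde B]$, i.e.\ comparability with $r=e^{-\frac{2\pi}{Tmn}\varphi}$. To upgrade this to $\dim_B\tilde\Gamma=1$ and Minkowski measurability I would, as in Statement 1 of Theorem \ref{theorem-glavni}, show the spiral has finite length. Parametrising by $\varphi$ through $(x,y)=(\tilde r^{\,n}\text{Cs}(\varphi),\tilde r^{\,m}\text{Sn}(\varphi))$ and using $\tilde r'=\pm g\,\tilde r=O(\tilde r)$ together with the boundedness of $\text{Cs},\text{Sn},\dot{\text{Cs}},\dot{\text{Sn}}$, one gets $|x'(\varphi)|=O(\tilde r^{\,n})$ and $|y'(\varphi)|=O(\tilde r^{\,m})$ for small $\tilde r$ (here $m,n\ge 1$ keeps $\tilde r^{\,n-1},\tilde r^{\,m-1}$ bounded). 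Thus the arc-length density is $O(\tilde r^{\,\min(m,n)})=O(\tilde r)=O(e^{-\frac{2\pi}{Tmn}\varphi})$, which is integrable on $[\varphi_0,\infty)$; a rectifiable curve has box dimension $1$ and is Minkowski measurable (cf.\ \cite{Falconer}, \cite[p.~106]{tricot}).

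The genuinely delicate point is \emph{why Statement 2 stops at comparability}. Unlike the $(n,n)$ case, where the standard polar map lets one build the bi-Lipschitz equivalence of Lemma \ref{lema-bilip} and transport the box dimension of the $\alpha$--power spiral back to $\tilde\Gamma$, here the generalized polar map $(r,\varphi)\mapsto(r^n\text{Cs}(\varphi),r^m\text{Sn}(\varphi))$ fails to be bi-Lipschitz as $r\to 0$ when $m\ne n$ (the two coordinate directions scale with different powers of $r$, mirroring the two distinct Poincar\'e map asymptotics). Consequently comparability with $r=\varphi^{-1/2mnk}$ carries no metric information about $\dim_B\tilde\Gamma$ --- exactly the phenomenon flagged in the Introduction and codified in the conjectured value $2-\frac{1+n/m}{1+2nk}$. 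So the main obstacle is not in proving the theorem as stated, which is routine once the integral is split, but in the fact that the natural coordinate change destroys the very metric information one would need to go further.
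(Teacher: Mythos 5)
Your proposal is correct and follows essentially the same route as the paper: integrate (\ref{bernoullieqn-generalized}) explicitly (separable for $k=0$, Bernoulli via $z=r^{-2mnk}$ for $k>0$), split $\int_0^\varphi \text{Sn}^{n-1}\text{Cs}^{m-1}\,d\tau$ into a linear term $\frac{2\pi}{Tmn}\varphi$ plus a bounded $T$-periodic remainder using Lemma \ref{lemma-pol-cor}, and conclude comparability, with finite length giving $\dim_B\tilde\Gamma=1$ and Minkowski measurability when $k=0$. Your arc-length estimate $|x'|=O(\tilde r^{\,n})$, $|y'|=O(\tilde r^{\,m})$ actually spells out a step the paper only asserts (``since $\text{Cs}$ and $\text{Sn}$ are bounded, the length is finite''), and your closing remark on why the generalized polar map obstructs a bi-Lipschitz transfer for $m\ne n$ matches the paper's own discussion.
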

\begin{proof}
First we prove Statement 1. When $k=0$, then the spiral $\widetilde{\Gamma}$ is given by
\begin{equation}
    \label{k=0-generalized}
    \tilde{r}(\varphi)=Ce^{\pm \int_0^\varphi\text{Sn}^{n-1}(\tau)\text{Cs}^{m-1}(\tau)d\tau}
\end{equation}
where the constant $C>0$ is uniquely determined by the initial condition on $\tilde{\Gamma}$. We used (\ref{bernoullieqn-generalized}). Now we proceed exactly as in the proof of Theorem \ref{theorem-glavni}. We have 
\begin{equation}
    \label{decomposition-generalized}
    \int_0^\varphi\text{Sn}^{n-1}(\tau)\text{Cs}^{m-1}(\tau)d\tau=\left(\frac{1}{T}\int_0^T\text{Sn}^{n-1}(\tau)\text{Cs}^{m-1}(\tau)d\tau\right)\varphi+P(\varphi),
\end{equation}
with 
\[ P(\varphi)=\int_0^{\varphi-l T}\text{Sn}^{n-1}(\tau)\text{Cs}^{m-1}(\tau)d\tau-\frac{1}{T}\int_0^T\text{Sn}^{n-1}(\tau)\text{Cs}^{m-1}(\tau)d\tau\left(\varphi-l T\right), \]
where $l$ is the largest integer such that $lT\le\varphi$, that is, $l = \lfloor \varphi/T \rfloor$ (we use that the integrand function is $T$-periodic). The function $P$ is bounded and $T$-periodic. Now, using this, Lemma \ref{lemma-pol-cor}.2, (\ref{k=0-generalized}) and (\ref{decomposition-generalized}), we have that $\tilde{\Gamma}$ is comparable with the exponential spiral $r=e^{\pm \frac{2\pi}{T m n}\varphi}$. Since $\text{Cs}(\varphi)$ and $\text{Sn}(\varphi)$ are bounded, the length of $\tilde{\Gamma}$ is finite. This completes the proof of Statement 1.
\smallskip

To prove Statement 2, it suffices to notice that the spiral $\tilde{\Gamma}$ is given by
\begin{equation}\label{conjecture-last-equ}
    \tilde{r}(\varphi)=\left(\mp2mnk\int_0^\varphi\text{Sn}^{n-1}(\tau)\text{Cs}^{m-1}(\tau)d\tau+C\right)^{-\frac{1}{2mnk}}
\end{equation}
and to use (\ref{decomposition-generalized}).
\end{proof}

\begin{rem}\label{remark-period-gen}
If either $m$ or $n$ is even, then the first statement of Lemma \ref{lemma-pol-cor} implies that system (\ref{degprvi-general}) has a center at the origin.
\end{rem}

\begin{rem}
When $n=m$, then we deal with the $(n,n)$--polar coordinates $(x,y)=(r^n\text{Cs}(\varphi),r^n \text{Sn}(\varphi))$. Notice that in Section \ref{section-main}, instead of these generalized polar coordinates, we worked with the standard polar coordinates in which the $\alpha$--power spirals \cite{tricot} are expressed. This was important in the proof of Theorem \ref{theorem-glavni} when $k>0$. 
\end{rem}

\begin{rem}\label{remark-conj-b}
In \cite{burrell} it is proved that the box dimension of the planar elliptical spiral $(x(\varphi),y(\varphi))=(\varphi^{- p_0} \cos \varphi, \varphi^{-q_0} \sin \varphi)$, $1<\varphi<\infty$, with $0<p_0\le q_0$ and $p_0<1$, is equal to $2-\frac{p_0+q_0}{1+q_0}$. If we replace $\text{Cs}(\varphi)$ and $\text{Sn}(\varphi)$ with $\cos\varphi$ and $\sin\varphi$, in the definition of $(n,m)$--polar coordinates, and (\ref{conjecture-last-equ}) with $\tilde{r}(\varphi)=\varphi^{-\frac{1}{2mnk}}$, then we obtain a natural candidate for the box dimension of the spiral $\tilde{\Gamma}$ when $k>0$ and $m\ge n$, which is equal to $2-\frac{1+\frac{n}{m}}{1+2nk}$. Note that $p_0=\frac{1}{2mk}$ and $q_0=\frac{1}{2nk}$.
\end{rem}

\section{The box dimension of slow-fast spirals near nilpotent contact points}\label{section-singular}
We consider a $C^\infty$-smooth family of Li\'{e}nard slow-fast systems
\bgeq\label{slow-fast}
\begin{array}{ccl}
\dot x&=&y-x^{2n}\\
\dot y&=&\epsilon (a+F(x,\rho))+O(\epsilon^2)
\end{array}
\endeq
where $\epsilon\ge 0$ is a (small) singular parameter, $a\sim 0\in\mathbb{R}$, $\rho\sim 0\in\mathbb{R}^m$, $n\ge 1$ is an integer, $F(x,\rho)$ and $O(\epsilon^2)$ are $C^\infty$-functions and $F(x,\rho)=-x^{2n-1}+O(x^{2n})$. We denote system \eqref{slow-fast} by $X_{\epsilon,a,\rho}$. When $\epsilon=0$, system $X_{\epsilon,a,\rho}$ has a curve of singularities given by $C=\{y-x^{2n}=0\}$. All the singularities are normally hyperbolic (i.e. precisely one eigenvalue of the linear part of $X_{0,a,\rho}$ at $p\in C$ is zero). An exception is the point $p=(0,0)\in C$ which is a nilpotent singularity (i.e., the normal hyperbolicity at the origin is lost). When $n=1$ (resp. $n>1$), we call the origin in $X_{\epsilon,a,\rho}$ a generic (resp. non-generic) contact point (see e.g. \cite{HuRo}). We focus on the fractal analysis of the slow-fast spirals near the contact point (see Definition \ref{def-lim-focus}) in both generic and non-generic case. We use the notion of box dimension in two dimensional ambient space and geometric chirps.

\begin{figure}[htb]
	\begin{center}
		\includegraphics[width=4cm,height=4cm]{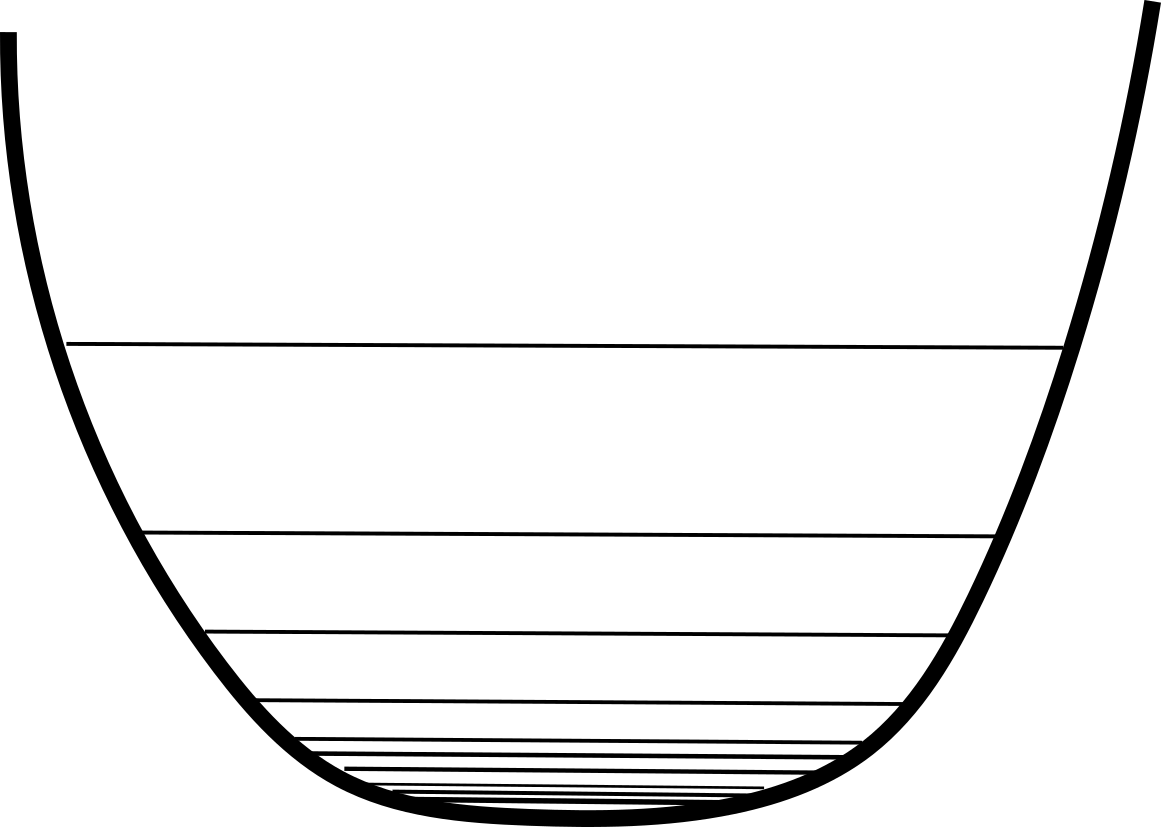}
		{\footnotesize
        \put(-62,71){$y_0$}  \put(-62,45){$y_1$} \put(-62,30){$y_2$}
}
         \end{center}
	\caption{The geometric $(\delta_1,\delta_2)$-chirp, with $\delta_1:=\frac{1}{2n}$ and $\delta_2:=\frac{2(k-n)+1}{2n}$, defined near the contact point of $X_{\epsilon,a,\rho}$ with codimension $k$ ($k\ge n$).}
	\label{figure:chirp}
\end{figure}
We denote by $\Sigma$ a section inside $\{x=0\}$, parametrized by $y\ge 0$, $y\sim 0$ ($y=0$ corresponds to the origin $(x,y)=(0,0)$).  We define the slow divergence integral along the attracting part $\{x>0 ,x\sim 0\}$ (resp. the repelling part $\{x<0,x\sim 0\}$) of $C$:
\begin{align*}
J_-(y,\rho):=\int_{\omega(y)}^0&\frac{-(2nx^{2n-1})^2dx}{F(x,\rho)}<0 \nonumber\\
& \left(\text{resp. }  J_+(y,\rho):=\int_{\alpha(y)}^0\frac{-(2nx^{2n-1})^2dx}{F(x,\rho)}<0 \right)
\end{align*}
where $(y,\rho)\sim (0,0)$, $y>0$ and $\omega(y)=y^{\frac{1}{2n}}>0$ (resp. $\alpha(y)=-y^{\frac{1}{2n}}<0$) is the $\omega$-limit (resp. $\alpha$-limit) of the fast horizontal orbit of $X_{0,a,\rho}$ through $y\in\Sigma$. The divergence of $X_{0,a,\rho}$ is given by $-2nx^{2n-1}$ and the slow dynamics along $C$ is $\frac{dx}{d\tau}=\frac{F(x,\rho)}{2nx^{2n-1}}$ where $\tau$ is the slow time. Note that $J_{\pm}$ are well-defined (i.e. finite) because the leading term of $F$ is $x^{2n-1}$. 
\smallskip

We assume that $(J_{-}-J_{+})(y,0)\ne 0$ for all $y\sim 0$ and $y>0$. (In the limit $y=0$, we have $(J_{-}-J_{+})(0,0)=0$ because $\alpha(0)=\omega(0)=0$.) If $(J_{-}-J_{+})(y,0)<0$ for $y\sim 0$ and $y>0$ (resp. $(J_{-}-J_{+})(y,0)>0$ for $y\sim 0$ and $y>0$), then the orbit $\mathcal{O}=\{y_0,y_1,y_2,\dots\}$, defined recursively by 
\begin{equation}\label{recursion}
 J_{-}(y_{l+1},0)=J_{+}(y_l,0) \ \left(\text{resp. } J_{-}(y_{l},0)=J_{+}(y_{l+1},0)\right), \ l\ge 0,
\end{equation}
with $y_0>0$ small and fixed, is decreasing and converges to zero. This is a simple consequence of the fact that the integrand in $J_\pm$ changes sign as $x$ varies through $x=0$.
\smallskip

Using the orbit $\mathcal{O}$ we define a geometric chirp near the contact point of $X_{\epsilon,a,\rho}$ (at level $(\epsilon,a,\rho)=(0,0,0)$):

\begin{equation}
    \label{chirp}
    \mathcal{U}=\bigcup_{y_l\in \mathcal{O}}U_l\subset \mathbb{R}^2, \quad U_l=]\alpha(y_l),\omega(y_l)[\times \{y_l\}.
\end{equation}
The geometric chirp $\mathcal{U}$ is the union of horizontal open intervals $]\alpha(y_l),\omega(y_l)[$ at level $y=y_l$ (see Figure \ref{figure:chirp}). The type $(\delta_1,\delta_2)$ of the geometric chirp $\mathcal{U}$ is given in Theorem \ref{theorem-slow-fast}.

\begin{definition}
\label{def-lim-focus}
Let $(J_{-}-J_{+})(y,0)<0$ (resp. $>0$), for $y\sim 0$ and $y>0$, and let $\mathcal{O}=\{y_0,y_1,y_2,\dots\}$ be the orbit with the initial point $y_0>0$ defined in (\ref{recursion}). The unstable (resp. stable) slow-fast spiral of the contact point of $X_{\epsilon,a,\rho}$, for $(\epsilon,a,\rho)=(0,0,0)$, is the union of the geometric chirp $\mathcal{U}$, defined in (\ref{chirp}), and the part of the curve of singularities $C$ between $\alpha(y_0)$ and $\omega(y_1)$ (resp. $\alpha(y_1)$ and $\omega(y_0)$). See Figure \ref{figure:SFspiral}.
\end{definition}
\begin{rem}\label{l-focus}
Following Definition \ref{def-lim-focus}, the stable slow-fast spiral consists of the intervals $U_l$, pointing from the left to the right, and a part of $C$. We follow $U_0$ until we hit $C$ in $x=\omega(y_0)$ (entry). Then we follow the curve $C$ from $x=\omega(y_0)$ to $x=\alpha(y_1)$ (exit), then $U_1$, the curve $C$ from $x=\omega(y_1)$ (entry) to $x=\alpha(y_2)$ (exit), etc. This way we ``spiral" around the origin $(x,y)=(0,0)$ (and approach the origin). We call this ``spiral" the slow-fast spiral because it contains fast and slow intervals of $X_{\epsilon,a,\rho}$ in the limit $\epsilon\to 0$ (thus, the ``spiral" is not regular). The unstable slow-fast spiral can be explained in similar fashion. 
\end{rem}
\begin{figure}[htb]
	\begin{center}
		\includegraphics[width=9cm,height=3.4cm]{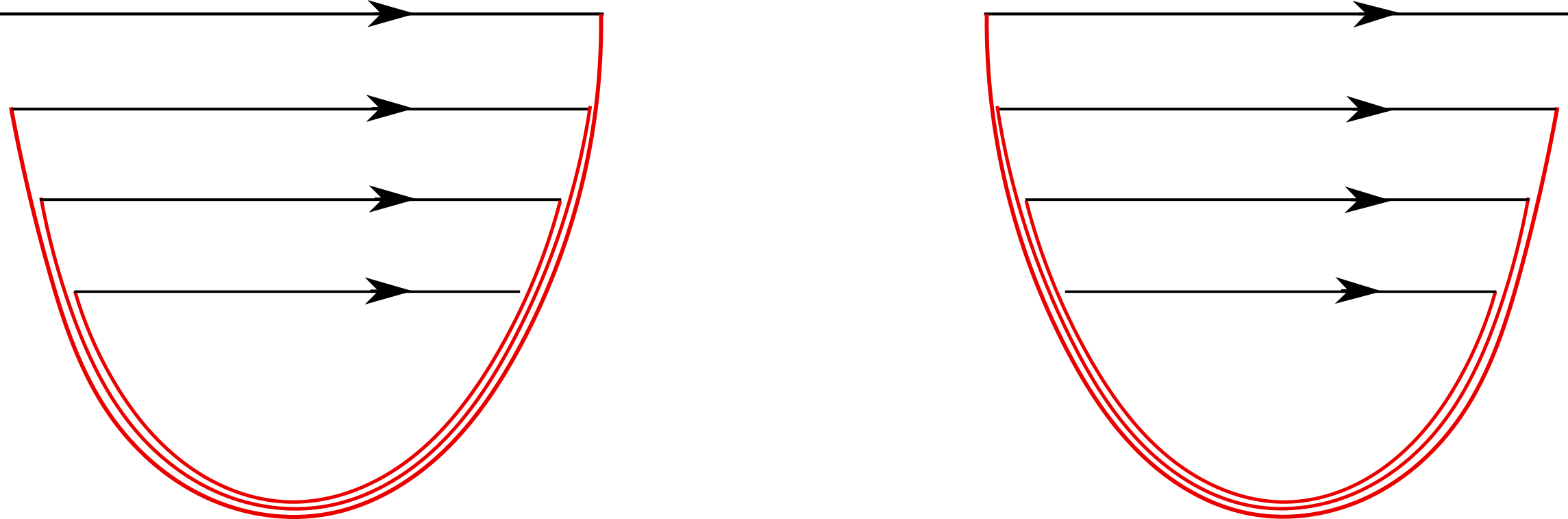}
		{\footnotesize
        \put(-57,97){$y_0$}  \put(-57,80){$y_1$} \put(-57,63){$y_2$}
        \put(-57,46){$y_3$}
        \put(-214,97){$y_0$} \put(-155,91){$\omega(y_0)$}  \put(-214,80){$y_1$}
        \put(-156,74){$\omega(y_1)$}
        \put(-214,63){$y_2$}
        \put(-159,56){$\omega(y_2)$}
        \put(-214,46){$y_3$}
        \put(-277,74){$\alpha(y_1)$}
        \put(-274,56){$\alpha(y_2)$}
        \put(1,74){$\omega(y_1)$}
        \put(-1,56){$\omega(y_2)$}
        \put(-118,91){$\alpha(y_0)$}
        \put(-118,74){$\alpha(y_1)$}
        \put(-116,56){$\alpha(y_2)$}
        \put(-52,-12){$(b)$}
        \put(-214,-12){$(a)$}
}
         \end{center}
	\caption{Slow-fast spirals with $x=\omega(y_l)$ as entry and $x=\alpha(y_l)$ as exit for all $l\in\mathbb{N}$. The slow segments are contained in the critical curve $C$ and the slow dynamics along $C$ points from the right to the left. (a) The stable slow-fast spiral. (b) The unstable slow-fast spiral.}
	\label{figure:SFspiral}
\end{figure}

\begin{rem}
The upper box dimension of the stable or unstable slow-fast spiral from Definition \ref{def-lim-focus} is equal to the upper box dimension of the geometric chirp $\mathcal{U}$ defined in (\ref{chirp}) because the upper box dimension is finitely stable (see Section \ref{section-definitions}), $\dim_BC=1$ and $\overline{\dim}_B\mathcal{U}\ge 1$. In the rest of this section we therefore focus on the computation of the upper box dimension of $\mathcal{U}$. 
\end{rem}

\smallskip

Let denote by $f_i(\rho)$, $i\ge 2n$, the coefficients of the Taylor expansion of $F$ at $x=0$, i.e. $j^\infty F(x,\rho)=-x^{2n-1}+\sum_{i=2n}^\infty f_i(\rho)x^i$. If there exists a nonzero even coefficient $f_{2k}(0)$, we say that $X_{\epsilon,a,\rho}$ has a finite codimension (the smallest $k\ge n$ with this property is the codimension of the contact point in $X_{\epsilon,a,\rho}$). In the generic case ($n=1$), a similar definition of the codimension can be found in \cite{DRbirth}.

\begin{theorem}\label{theorem-slow-fast}
Let $y_0>0$ be small and fixed and let $\mathcal{O}=\{y_0,y_1,y_2,\dots\}$ be the orbit defined by \eqref{recursion} tending monotonically to $y=0$. Suppose that the codimension of the contact point in $X_{\epsilon,a,\rho}$ is finite and equal to $k$. Then we have $y_l\simeq l^{-\frac{2n}{2(k-n)+1}}$ as $l\to\infty$, $y_l-y_{l+1}\simeq l^{-\frac{2k+1}{2(k-n)+1}}$ as $l\to\infty$ and $\dim_B\mathcal{O}=\frac{2(k-n)+1}{2k+1}\in ]0,1[$. Moreover, $\mathcal{U}$ is the geometric $(\delta_1,\delta_2)$-chirp, with $\delta_1:=\frac{1}{2n}$ and $\delta_2:=\frac{2(k-n)+1}{2n}$, and  $\overline{\dim}_B\mathcal{U}=\frac{4k-2n+1}{2k+1}\in [1,2[$. The box dimensions are independent of the initial point $y_0$.
\end{theorem}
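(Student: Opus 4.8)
The whole theorem reduces to understanding the asymptotics of the sequence $(y_l)$ defined by the recursion \eqref{recursion}, since everything else (the chirp type, the box dimensions of $\mathcal{O}$ and $\mathcal{U}$) follows from $y_l$ and the fact that the horizontal intervals $U_l$ have endpoints $\alpha(y_l)=-y_l^{1/2n}$ and $\omega(y_l)=y_l^{1/2n}$. The plan is therefore: first extract the leading asymptotics of the difference $(J_--J_+)(y,0)$ as $y\to 0^+$ in terms of the codimension $k$; second, turn the recursion into a difference equation whose solution gives $y_l\simeq l^{-2n/(2(k-n)+1)}$; and third, read off the remaining quantities.

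\textbf{Step 1: asymptotics of $J_--J_+$.} I would compute $(J_--J_+)(y,0)$ by substituting $x=y^{1/2n}u$ (so $\omega(y)$ corresponds to $u=1$ and $\alpha(y)$ to $u=-1$) into the integrals, and expand the integrand $-(2nx^{2n-1})^2/F(x,0)$ using the Taylor data $F(x,0)=-x^{2n-1}+\sum_{i\ge 2n}f_i(0)x^i$. Writing $F(x,0)=-x^{2n-1}\bigl(1-\sum_{i\ge 2n}f_i(0)x^{i-2n+1}\bigr)$, the integrand becomes $4n^2 x^{2n-1}\bigl(1+\sum f_i(0)x^{i-2n+1}+\cdots\bigr)$, a convergent expansion in powers of $x$. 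The key cancellation is that $J_-$ integrates over $x>0$ (from $0$ to $\omega(y)=y^{1/2n}$) while $J_+$ integrates over $x<0$: odd powers of $x$ survive the difference $J_--J_+$ and even powers cancel. Since the codimension $k$ is the smallest index with $f_{2k}(0)\ne 0$ among even coefficients, the surviving odd-in-$x$ term of lowest order in the difference comes from $f_{2k}(0)x^{2k}$ multiplied by $x^{2n-1}$, giving a term $\sim x^{2k+2n-1}$, which upon integration and the scaling $x\sim y^{1/2n}$ yields
\begin{equation}\label{Jdiff-asymp}
(J_--J_+)(y,0)\simeq c\, y^{\frac{2(k-n)+1}{2n}}, \quad y\to 0^+,
\end{equation}
for a nonzero constant $c$ whose sign matches the stable/unstable assumption. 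This step is the main obstacle: one must justify that all lower-order contributions to the difference genuinely cancel (using the odd/even parity in $x$ together with the definition of codimension) and that \eqref{Jdiff-asymp} is the true leading order, with a controlled remainder.

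\textbf{Step 2: solving the recursion.} Setting $\theta:=\frac{2(k-n)+1}{2n}$, the recursion $J_-(y_{l+1},0)=J_+(y_l,0)$ rearranges to $(J_--J_+)(y_{l+1},0)=J_+(y_{l+1},0)-J_+(y_l,0)$, and since $J_+$ is (to leading order) of the same power type as $J_-$, a standard comparison argument — of the kind used in \cite{tricot} for power spirals, or by treating the recursion as a discretization of the ODE $\dot{y}\simeq -c\,y^{\theta+1}$ (note $J_+(y)\simeq C y^{\theta}$ so $J_+(y_{l+1})-J_+(y_l)\simeq C\theta\, y_l^{\theta-1}(y_{l+1}-y_l)$, matching against $c\,y_{l+1}^\theta$) — gives $y_{l+1}-y_l\simeq -\,\text{const}\cdot y_l^{\theta+1}$. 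Summing this telescoping relation, or invoking the standard lemma that a sequence with $y_l-y_{l+1}\simeq y_l^{\theta+1}$ satisfies $y_l\simeq l^{-1/\theta}$, yields
\begin{equation}\label{yl-asymp}
y_l\simeq l^{-\frac{2n}{2(k-n)+1}}, \quad y_l-y_{l+1}\simeq l^{-\frac{2k+1}{2(k-n)+1}},
\end{equation}
where the second relation follows from the first together with $y_l-y_{l+1}\simeq y_l^{\theta+1}$ and $\theta+1=\frac{2k+1}{2n}$.

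\textbf{Step 3: box dimensions and chirp type.} The set $\mathcal{O}=\{y_l\}$ is (after the monotone reparametrization) a power sequence of exponent $1/\theta=\frac{2n}{2(k-n)+1}$ on the line, so by the classical formula for the box dimension of sequences $\{l^{-s}\}$ (Falconer, \cite{Falconer}) one gets $\dim_B\mathcal{O}=\frac{1}{1/\theta+1}=\frac{\theta}{\theta+1}=\frac{2(k-n)+1}{2k+1}$, lying in $]0,1[$ since $k\ge n$. For the chirp $\mathcal{U}$, the two exponents governing a geometric $(\delta_1,\delta_2)$-chirp are the rate at which the interval half-lengths $\omega(y_l)=y_l^{1/2n}$ shrink and the rate at which the levels $y_l$ accumulate; using \eqref{yl-asymp} the half-width scales like $y_l^{1/2n}\simeq l^{-1/(2(k-n)+1)}$ and comparing against the vertical spacing identifies $\delta_1=\frac{1}{2n}$ (the power relating $x$-extent to $y$) and $\delta_2=\frac{2(k-n)+1}{2n}$. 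Finally I would invoke the known box-dimension formula for geometric chirps (the $(\delta_1,\delta_2)$-chirp result from \cite{Lap}) to obtain $\overline{\dim}_B\mathcal{U}=\frac{4k-2n+1}{2k+1}$, which lies in $[1,2[$; the equality $\overline{\dim}_B(\text{spiral})=\overline{\dim}_B\mathcal{U}$ was already reduced to this in the preceding remark via finite stability and $\dim_B C=1$. Independence from $y_0$ is automatic because the asymptotic relations $\simeq$ are insensitive to the initial value, as the leading exponents in \eqref{Jdiff-asymp} and \eqref{yl-asymp} depend only on $n$ and $k$.
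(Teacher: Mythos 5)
Your overall plan coincides with the paper's: (i) extract the leading asymptotics of $(J_--J_+)(y,0)$ from the codimension, (ii) convert the recursion into a difference relation $y_l-y_{l+1}\simeq y_l^{\gamma}$ and apply the standard lemma (Theorem 1 of \cite{elzuzu}) to get $y_l\simeq l^{-1/(\gamma-1)}$, (iii) read off $\dim_B\mathcal{O}$ and the $(\delta_1,\delta_2)$-chirp dimension from \cite{Lap}. Step 3 is fine. However, Steps 1 and 2 as written contain genuine errors. In Step 1 the parity argument is backwards: $(J_--J_+)(y,0)=\int_{-y^{1/2n}}^{y^{1/2n}}\frac{(2nx^{2n-1})^2}{F(x,0)}\,dx$ is an integral over a \emph{symmetric} interval, so the odd powers of $x$ cancel and the \emph{even} powers survive. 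Moreover, since $F(x,0)=-x^{2n-1}\bigl(1-\sum_{i\ge 2n}f_ix^{i-2n+1}\bigr)$, the monomial contributed by $f_{2k}x^{2k}$ after expanding the reciprocal is $f_{2k}x^{2k-2n+1}$, which multiplied by $x^{2n-1}$ gives $x^{2k}$ (not $x^{2k+2n-1}$); integrating $x^{2k}$ over $[-y^{1/2n},y^{1/2n}]$ gives the correct leading order
\begin{equation*}
(J_--J_+)(y,0)\simeq y^{\frac{2k+1}{2n}},
\end{equation*}
not $y^{\frac{2(k-n)+1}{2n}}$ as in your \eqref{Jdiff-asymp}.

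Step 2 then rests on two false premises. First, $J_+(y)$ is \emph{not} $\simeq Cy^{\theta}$: both $J_-$ and $J_+$ have the same leading term $-2ny$, and it is only their difference that is of the higher order $y^{(2k+1)/(2n)}$. Second, even granting $J_+(y)\simeq Cy^{\theta}$, your matching $C\theta\,y_l^{\theta-1}(y_{l+1}-y_l)\simeq c\,y_{l+1}^{\theta}$ yields $y_{l+1}-y_l\simeq y_l$, not $y_l^{\theta+1}$; the claimed conclusion does not follow from your own displayed computation. The two errors happen to compensate, so your final relation $y_l-y_{l+1}\simeq y_l^{\theta+1}$ with $\theta+1=\frac{2k+1}{2n}$ is the correct one, but it is not derived validly. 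The clean way to close the gap (and what the paper does) is to use the recursion $J_-(y_{l+1},0)=J_+(y_l,0)$ to write
\begin{equation*}
(J_--J_+)(y_l,0)=-\int_{y_l^{1/2n}}^{y_{l+1}^{1/2n}}\frac{(2nx^{2n-1})^2}{F(x,0)}\,dx=\int_{y_l}^{y_{l+1}}2n(1+o(1))\,du
\end{equation*}
via the substitution $u=x^{2n}$, so that the Mean Value Theorem and the corrected asymptotics of $(J_--J_+)$ give $y_l-y_{l+1}\simeq y_l^{\frac{2k+1}{2n}}$ directly; from there your Step 3 goes through unchanged.
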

\begin{proof}
Suppose that the assumptions of Theorem \ref{theorem-slow-fast} are satisfied. The function $J_{-}-J_{+}$, for $\rho=0$, can be written as 
\begin{equation}
    \label{slow-fast-eq-1}
    (J_{-}-J_{+})(y,0)=\int_{-y^{\frac{1}{2n}}}^{y^{\frac{1}{2n}}}\frac{4n^2x^{2n-1}dx}{D(x)}
\end{equation}
where $y>0$, $y\sim 0$, $f_i:=f_i(0)$, $k\ge n$ is the codimension of $X_{\epsilon,a,\rho}$ ($f_{2k}\ne 0$) and $D(x)=-1+\sum_{i=n}^{k-1}f_{2i+1}x^{2(i-n+1)}+f_{2k}x^{2(k-n)+1}+O(x^{2(k-n+1)})$. The integrand function in \eqref{slow-fast-eq-1} has the following form:
\begin{equation}
    \label{slow-fast-eq-2}
    \frac{4n^2x^{2n-1}}{-1+\sum_{i=n}^{k-1}f_{2i+1}x^{2(i-n+1)}}-4n^2f_{2k}x^{2k}(1+O(x)),
\end{equation}
where the first term is an odd function. From \eqref{slow-fast-eq-1} and \eqref{slow-fast-eq-2} follows now that 
\begin{equation}
    \label{slow-fast-eq-3}
    (J_{-}-J_{+})(y,0)=-\frac{8n^2}{2k+1}f_{2k}y^{\frac{2k+1}{2n}}(1+o(1))
\end{equation}
where $o(1)$ tends to zero when $y\to 0$. In the rest of the proof we assume that $(J_{-}-J_{+})(y,0)<0$ for $y\sim 0$ and $y>0$, i.e. $f_{2k}>0$ (the case where $(J_{-}-J_{+})(y,0)>0$ for $y\sim 0$ and $y>0$ ($f_{2k}<0$) can be treated in a similar way).  We have 
\begin{align}
\label{slow-fast-eq-4}
(J_{-}-J_{+})(y_l,0)=&\int_{-y_l^{\frac{1}{2n}}}^{y_l^{\frac{1}{2n}}}\frac{(2nx^{2n-1})^2dx}{F(x,0)}\nonumber\\
=& \int_{-y_l^{\frac{1}{2n}}}^{y_l^{\frac{1}{2n}}}\frac{(2nx^{2n-1})^2dx}{F(x,0)}-\int_{-y_l^{\frac{1}{2n}}}^{y_{l+1}^{\frac{1}{2n}}}\frac{(2nx^{2n-1})^2dx}{F(x,0)}\nonumber\\
=& -\int_{y_l^{\frac{1}{2n}}}^{y_{l+1}^{\frac{1}{2n}}}\frac{(2nx^{2n-1})^2dx}{F(x,0)}\nonumber\\
=& \int_{y_l}^{y_{l+1}}2n(1+o(1))du
\end{align}
where in the second step we use \eqref{recursion} ($\int_{-y_l^{\frac{1}{2n}}}^{y_{l+1}^{\frac{1}{2n}}}=0$) and in the last step we use the coordinate change $x^{2n}=u$ (the $o(1)$-term in the last integral tends to zero as $u\to 0$). Note that the integrand function in \eqref{slow-fast-eq-4} is positive and at least continuous in $u\ge 0$ and $u\sim 0$. Finally, The Mean Value Theorem for Integrals, \eqref{slow-fast-eq-3} and \eqref{slow-fast-eq-4} imply 
\begin{equation}
    \label{slow-fast-eq-5}
    y_l-y_{l+1}\simeq y_l^{\frac{2k+1}{2n}}, \ l\to\infty.
\end{equation}
Since $\gamma:=\frac{2k+1}{2n}>1$ ($k\ge n$), Theorem 1 of \cite{elzuzu} implies that 
\begin{equation}\label{slow-fast-eq-6}
    y_l\simeq l^{-\frac{2n}{2(k-n)+1}}, \  l\to \infty.
\end{equation} This together with \eqref{slow-fast-eq-5} implies 
\[y_l-y_{l+1}\simeq l^{-\frac{2k+1}{2(k-n)+1}}, \ l\to\infty.\]
Using Theorem 1 of \cite{elzuzu} once more we get 
\[\dim_B\mathcal{O}=1-\frac{1}{\gamma}=\frac{2(k-n)+1}{2k+1}\in ]0,1[.\]
The results are clearly independent of the chosen $y_0>0$ (see \cite{elzuzu}).
\smallskip

It remains to find the upper box dimension of the geometric chirp $\mathcal{U}$. We write $\mathcal{U}=\mathcal{U}_1\cup \mathcal{U}_2$ where
\[\mathcal{U}_1=\bigcup_{y_l\in \mathcal{O}} ]-y_l^{\frac{1}{2n}},0]\times \{y_l\}, \ \ \mathcal{U}_2=\bigcup_{y_l\in \mathcal{O}} [0,y_l^{\frac{1}{2n}}[\times \{y_l\}.\]
From Section 3.6.1 in \cite{Lap} and \eqref{slow-fast-eq-6} follows that $\mathcal{U}_1$ and $\mathcal{U}_2$ are geometric $(\delta_1,\delta_2)$-chirps where $\delta_1:=\frac{1}{2n}$ and $\delta_2:=\frac{2(k-n)+1}{2n}$. Now we have (see \cite{Lap} once more) 
\[\overline{\dim}_B{\mathcal{U}_1}=\overline{\dim}_B{\mathcal{U}_2}=\max\left\{1,2-\frac{1+\delta_1}{1+\delta_2}\right\}=\frac{4k-2n+1}{2k+1}\in [1,2[.\]
This completes the proof of Theorem \ref{theorem-slow-fast} since $\overline{\dim}_B{\mathcal{U}}=\max\left\{\overline{\dim}_B{\mathcal{U}_1},\overline{\dim}_B{\mathcal{U}_2}\right\}$.
\end{proof}
\begin{rem}
Let $n\ge 1$ be fixed. Following Theorem \ref{theorem-slow-fast}, there is a one-one correspondence between $\dim_B\mathcal{O}$ (or $\overline{\dim}_B{\mathcal{U}}$) and the codimension $k$ of the contact point of $X_{\epsilon,a,\rho}$. When $k\to\infty$, then $\dim_B\mathcal{O}\to 1$ and $\overline{\dim}_B{\mathcal{U}}\to 2$.
\end{rem}
\begin{rem}
In \cite{CHV} it has been proved that $\dim_B\mathcal{O}=\frac{2k-1}{2k+1}$ in the generic case.
\end{rem}

\section{Box dimension of $3$-dimensional spiral}\label{section-3dim}

Using results from \cite{burrell} for the planar elliptical spiral $\tilde\Gamma$
\bgeq\label{fspiral}
\begin{array}{ccl}
x(t)&=&t^{-p_0} \cos t \\
 y(t)&=& t^{-q_0} \sin t,  
\end{array}
\endeq
where $0<p_0\le q_0\le 1$, we can obtain the box dimension of trajectory of $3$-dimensional systems. We  formulate this result as an example; a generalization is possible combining the results from \cite{burrell,KVZ2015}.

\begin{examp} 

The spiral (\ref{fspiral}) is  the projection of a trajectory of the system 
\begin{eqnarray}\label{sistpq}
\dot x&=&-y-p_0xz^{q_0-p_0+1} \nonumber  \\
\dot y&=&  xz^{2(q_0-p_0)}-q_0yz^{q_0-p_0+1} \\
\dot z&=& -z^{2+q_0-p_0}, \nonumber
\end{eqnarray}
to the $(x,y)$-plane.  
The system (\ref{sistpq}) is obtained by computing the derivative of  (\ref{fspiral}), using $z=t^{-1}$, and multiplying it by $z^{q_0-p_0}>0$. The parametrization of the initial curve has been changed, but not the curve itself, so the box dimension of (\ref{fspiral}) has been preserved, and equal to (see \cite{burrell}) 
$$
\dim_B\tilde\Gamma =2-\frac{p_0+q_0}{1+q_0}.
$$

Using the parametrization 
\begin{eqnarray}\label{param}
x(t)&=&t^{-p_0} \cos t \nonumber  \\
 y(t)&=& t^{-q_0} \sin t \\  \nonumber
 z(t)&=& t^{-1},  \nonumber
\end{eqnarray}
obtained before the time rescaling,
we can compute the invariant surface $ \frac{x^2}{z^{2p_0}} + \frac{y^2}{z^{2q_0}}=1 $ containing the trajectories of (\ref{sistpq}). 
Derivatives  $\frac{\partial z}{\partial x}$ and  $\frac{\partial z}{\partial y}$ are bounded, so the map $z(x,y)$ is Lipshitz.
Using \cite{ZuZuR3} we can conclude that the $3$-dimensional trajectory of the system (\ref{sistpq}) has the same box dimension as the projection curve $\tilde\Gamma$.
\smallskip

The projection $\tilde\Gamma_{xz}$ of the trajectory of (\ref{sistpq}) to the $(x,z)$-plane is a curve called chirp  $x(z)=z^{p_0}\cos 1/z$,  for $z>0$ small.
We use (see \cite{tricot}) 
\begin{equation}\label{standard_chirp}\nonumber
X_{\a,\b}(\tau)=\tau^{\a}\sin(\tau^{-\beta})\nonumber.
\end{equation}
For $0<\a\le\b$ we have
\begin{equation}\label{dim}\nonumber
\dim_BX_{\a,\b}=2-\frac{\a+1}{\b+1}\nonumber,
\end{equation}
and obtain
$$
\dim_B\tilde\Gamma_{xz} =\frac32-\frac{p_0}{2}.
$$ 
Analogously, the projection $\tilde\Gamma_{yz}$ of the trajectory of (\ref{sistpq}) to $(y,z)$-plane is   $y(z)=z^{q_0}\sin 1/z$,  with the box dimension
$$
\dim_B\tilde\Gamma_{yz} =\frac32-\frac{q_0}{2}.
$$ 

\end{examp}

\section*{Acknowledgments}

This research was supported by: Croatian Science Foundation (HRZZ) grant PZS-2019-02-3055 from “Research Cooperability” program funded by the European Social Fund.

\bibliographystyle{abbrv}
\bibliography{bibtexMobius}

\end{document}